\documentclass[pdflatex,sn-mathphys-num]{sn-jnl}


\usepackage{graphicx}%
\usepackage{multirow}%
\usepackage{amsmath,amssymb,amsfonts}%
\usepackage{amsthm}%
\usepackage{mathrsfs}%
\usepackage[title]{appendix}%
\usepackage{xcolor}%
\usepackage{textcomp}%
\usepackage{manyfoot}%
\usepackage{booktabs}%
\usepackage{algorithm}%
\usepackage{algorithmicx}%
\usepackage{algpseudocode}%
\usepackage{listings}%
\usepackage{amsmath}
\usepackage{amsfonts}
\usepackage{amsthm}
\usepackage{mathtools}
\usepackage{amssymb}
\usepackage[shortlabels]{enumitem}

\newtheorem{theorem}{Theorem}[section]
\newtheorem{lemma}[theorem]{Lemma}

\newtheorem{conjecture}[theorem]{Conjecture}
\newtheorem{proposition}[theorem]{Proposition}
\newtheorem{definition}[theorem]{Definition}
\newtheorem{example}[theorem]{Example}
\newtheorem{problem}[theorem]{Problem}

\renewcommand{\epsilon}{\varepsilon}


\DeclarePairedDelimiterX{\norm}[1]{\lVert}{\rVert}{#1}
\DeclarePairedDelimiterX{\abs}[1]{\lvert}{\rvert}{#1}
\DeclarePairedDelimiter\bra{\langle}{\rvert}
\DeclarePairedDelimiter\ket{\lvert}{\rangle}
\DeclarePairedDelimiterX\braket[2]{\langle}{\rangle}{#1\,\delimsize\vert\,\mathopen{}#2}

\DeclarePairedDelimiterX{\gen}[1]{\langle}{\rangle}{#1}

\DeclareMathOperator{\tr}{tr}

\DeclareMathOperator{\Val}{Val}
\newcommand{\sub}{\mathfrak{sub}}
\newcommand{\IRS}{\operatorname{IRS}}
\DeclareMathOperator{\Stab}{Stab}

\usepackage{graphicx}

\newcommand{\Conv}{\mathop{\scalebox{1.5}{\raisebox{-0.2ex}{$\ast$}}}}



\raggedbottom

\begin{document}

\title[IRSs, Soficity, and L\"uck's determinant conjecture]{Invariant Random Subgroups, Soficity, and L\"uck's determinant conjecture}


\author*[1]{\fnm{Aareyan} \sur{Manzoor}}\email{a2manzoo@uwaterloo.ca}

\affil*[1]{\orgdiv{Department of Math}, \orgname{University of Waterloo}, \orgaddress{\street{University Avenue}, \city{Waterloo}, \postcode{N2L 3G1}, \state{Ontario}, \country{Canada}}}


\abstract{We extend Lück's determinant conjecture from groups to invariant random subgroups (IRS) of free groups, a framework generalizing groups where a non-sofic object is known to exist. For every free group, we prove the existence of an IRS satisfying the determinant conjecture that is not co-hyperlinear, and hence not co-sofic. This provides evidence that satisfying the determinant conjecture might be a weaker property than soficity for groups, and consequently the conjecture possibly holds for all groups. We use techniques from non-local games and $\mathsf{MIP}^* = \mathsf{RE}$, showing more generally when the latter can be used to narrow down when a von Neumann algebra (or IRS) contains a non-Connes embeddable object.}

\keywords{Connes embedding, Aldous-Lyons, invariant random subgroup, L\"uck's Determinant Conjecture}


\pacs[MSC Classification]{46L10,20F69,81P99}

\maketitle

\section{Introduction}

Let $\Gamma$ be a discrete group, $L(\Gamma)$ its group von Neumann algebra, and $\tau$ the standard trace on $L(\Gamma)$.  
We say that $\Gamma$ satisfies \textbf{L\"uck's determinant conjecture} if
\[
(\tau \otimes \mathrm{tr}_n)\big(\ln_+(A^*A)\big) \geq 0
\quad \forall\, A \in M_n(\mathbb{Z}[\Gamma]),\ n \in \mathbb{N},
\]
where $\mathrm{tr}_n$ is the normalized trace on $M_n(\mathbb{C})$, and
\[
\ln_+(t) =
\begin{cases}
0 & t = 0, \\
\ln(t) & t > 0.
\end{cases}
\]
When exponentiated, this quantity is called the \textbf{modified Fuglede--Kadison determinant} of $A$. In other words, $\ln_+(A^*A)$ is the logarithm of $A^*A$ with zeros ignored; exponentiating recovers the product of its non-zero singular values.

\begin{conjecture}[\cite{luck_l2-invariants_2002}]
Every discrete group satisfies L\"uck's determinant conjecture.
\end{conjecture}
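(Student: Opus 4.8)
The plan is to establish the conjecture first in the well-understood \emph{sofic} case and then confront the genuine obstruction, namely that soficity of all discrete groups is itself open. Fix $A \in M_n(\mathbb{Z}[\Gamma])$; since $A$ involves only finitely many group elements, everything takes place inside a fixed finitely generated subgroup, and the quantity to control is $\int_{(0,\infty)} \ln(t)\, d\mu(t)$, where $\mu$ is the spectral measure of $A^*A$ with respect to the trace $\tau \otimes \mathrm{tr}_n$, with its atom at $0$ discarded. So the assertion is that no spectral mass of $A^*A$ escapes toward $0$ quickly enough to drive this logarithmic integral negative.

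The arithmetic heart of the argument is a finite-dimensional lemma: for any integer matrix $M \in M_N(\mathbb{Z})$ of rank $r$, the product of the nonzero singular values satisfies $\prod_{\sigma_i > 0} \sigma_i \geq 1$. Indeed this product equals the operator norm of the $r$-th exterior power $\Lambda^r M$, which dominates the absolute value of every $r \times r$ minor; as $M$ has rank $r$ some such minor is a nonzero integer, hence at least $1$. Consequently $\mathrm{tr}_N(\ln_+(M^*M)) = \frac{2}{N}\log \prod_{\sigma_i>0}\sigma_i \geq 0$, which is precisely the determinant inequality for finite integer matrices.

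Next I would transport this to $\Gamma$ via approximation. When $\Gamma$ is sofic, a sofic approximation $\phi_k$ into symmetric groups sends each relevant generator to a permutation matrix, yielding integer matrices $A_k$ whose normalized moments $\mathrm{tr}((A_k^*A_k)^m)$ converge to $(\tau\otimes\mathrm{tr}_n)((A^*A)^m)$ for every $m$. Hence the spectral measures $\mu_k$ of $A_k^*A_k$ converge weakly to $\mu$, and their supports stay inside a fixed interval $[0,C]$ since $\|A_k\|$ is uniformly bounded. Now $\ln_+$ is upper semicontinuous and bounded above on $[0,C]$, so the portmanteau inequality gives $\int \ln_+ \, d\mu \geq \limsup_k \int \ln_+\, d\mu_k \geq 0$, the last step invoking the finite-dimensional lemma for each $A_k$. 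This settles the conjecture for sofic groups, and \emph{a fortiori} for residually finite and amenable groups, where the approximating finite objects are produced directly by finite quotients or F\o lner sets.

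The step I expect to be the true obstacle is exactly the last reduction: for a general discrete group there is no known finite-dimensional approximation to feed into the lemma, and whether every group is sofic is a major open problem. The whole argument above is contingent on producing integer matrices whose spectra approximate that of $A^*A$, and outside the sofic world nothing guarantees this. This is the gap the present paper attacks---by enlarging the setting to invariant random subgroups and importing rigidity from $\MIPstar = \RE$, one aims to certify the determinant inequality without first establishing soficity, thereby suggesting that satisfying the determinant conjecture is strictly weaker than soficity and may persist even if non-sofic objects exist.
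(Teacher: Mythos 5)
This statement is an open conjecture, not a theorem of the paper: the paper cites it from L\"uck's book precisely because no proof is known, and nothing in the paper (or anywhere else) establishes it for all discrete groups. So any complete ``proof'' would be a major breakthrough, and yours is not one --- as you yourself concede in the final paragraph, the argument only goes through when $\Gamma$ is sofic. That sofic case is correct and is essentially the known Elek--Szab\'o argument: the finite-dimensional lemma (product of nonzero singular values of an integer matrix is at least $1$, via the smallest nonzero coefficient of the characteristic polynomial or, equivalently, a nonzero integer minor) combined with weak convergence of spectral measures and upper semicontinuity of $\ln_+$ is exactly the mechanism the paper reuses in Lemma~\ref{lem:2.13} for co-sofic IRSs. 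But the reduction ``every group is sofic'' is itself a famous open problem, and without it your argument produces no integer matrices to feed into the lemma. That is not a repairable technical gap in your write-up; it is the entire content of the conjecture.

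One smaller caution: even granting soficity, your portmanteau step needs care. The function $\ln_+$ is \emph{not} upper semicontinuous at $0$ (it jumps from $-\infty$ up to $0$ there), so weak convergence of $\mu_k$ to $\mu$ does not by itself give $\int \ln_+\,d\mu \geq \limsup_k \int \ln_+\,d\mu_k$; spectral mass of the $\mu_k$ near $0$ could in principle be converted into an atom of $\mu$ at $0$ whose logarithmic contribution is silently discarded. The standard fix (used in Elek--Szab\'o and implicitly in the paper's Lemma~\ref{lem:2.13}) is to approximate $\ln_+$ from above by the truncations $\max(\ln t, -M)$, each of which is continuous and bounded on $[0,C]$, apply weak convergence to those, and then let $M \to \infty$ by monotone convergence --- the direction of the inequality is preserved because the truncations dominate $\ln_+$ and the finite-dimensional bound already holds for each $\mu_k$ without truncation. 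You should make that step explicit; as written, the semicontinuity claim is false at the one point that matters.
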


The determinant conjecture is deeply connected to both operator algebras and topology. 
For instance, if a finitely generated group satisfies it and has vanishing first $\ell^2$-Betti number, 
then $L(\Gamma)$ is strongly $1$-bounded \cite{hayes_vanishing_2023,shlyakhtenko_von_2021}, 
a rigidity property in free probability that rules out embeddings of free group factors. 
It is also essential for the definition of $L^2$-torsion \cite{leary_l2-invariants_2009}, 
a powerful invariant in geometry and topology. 

The determinant conjecture is known to hold for all \textbf{sofic groups} \cite{elek_hyperlinearity_2005}. Whether every group is sofic remains an open problem, though it is widely believed that non-sofic groups exist.  
This makes it difficult to distinguish the determinant conjecture from soficity solely in the setting of groups.

We can circumvent this problem by working in the framework of \textbf{invariant random subgroups} (IRSs) of the free group. IRSs provide a setting where non-sofic analogues are known to exist, allowing us to separate the determinant conjecture from soficity in a way not possible for groups themselves.

An IRS is a conjugation-invariant Borel probability measure on the set of subgroups of a group.  
Dirac measures on normal subgroups are examples of IRSs, so this concept generalizes the notion of a normal subgroup.  
Since normal subgroups of free groups correspond to all finitely generated groups via quotients, IRSs of a free group can be seen as a natural generalization of groups themselves.  
The IRS analogue of soficity is called \textbf{co-soficity}.

Recent work has shown that every free group admits a non co-sofic IRS \cite{bowen_aldous--lyons_2024,bowen_aldous--lyons_2024-1}.  
In \cite{manzoor_there_2025}, the methods were significantly simplified and extended to produce a \textbf{non-co-hyperlinear} IRS on each free group—one failing the natural IRS analogue of hyperlinearity.

We can extend the determinant conjecture from groups to IRSs in a straightforward way, leading to the following central question:

\begin{problem}\label{prob:1.2}
Do all IRSs of a free group satisfy L\"uck's determinant conjecture?
\end{problem}

While we cannot yet resolve this fully, our main result provides evidence towards a positive answer.

\begin{theorem}\label{thm:1.3}
Let $\Gamma$ be a free group. There exists an IRS $H$ on $\Gamma$ that satisfies L\"uck's determinant conjecture but is not co-hyperlinear.
\end{theorem}

Such an $H$ cannot be co-sofic, as co-sofic IRSs are co-hyperlinear.  
This separation suggests that the determinant conjecture may be strictly weaker than soficity, strengthening the case that it could hold for \textbf{all} groups, including potential non-sofic ones. Previously, the determinant conjecture was thought of as ``an integral cousin of the Connes embedding property'' \cite{lueck_l2-torsion_2010}, but this shows that is wrong.

Note that the measured determinant conjecture \cite{lueck_l2-torsion_2010} implies a positive solution to Problem~\ref{prob:1.2}, so our result also provides evidence for the measured determinant conjecture. Actually, our results almost imply that there is a non-sofic equivalence relation satisfying the determinant conjecture. The obstruction is moving from an IRS to the full relation algebra.

Our proof of Theorem~\ref{thm:1.3} adapts the approach of \cite{manzoor_there_2025}, which is based on \textbf{non-local games}.  
This framework, together with the computability result $\mathsf{MIP}^* = \mathsf{RE}$
 \cite{ji_mipre_2022}, was used to prove the existence of a non--Connes embeddable von Neumann algebra.  
Much of the recent progress in narrowing down possible counterexamples to Connes' embedding problem has relied on this method.  
A well-known criticism, however, is that the argument is highly non-constructive: $\mathsf{MIP}^* = \mathsf{RE}$
 ensures existence but does not produce a useful explicit example.

However, there is a lot of interesting work that is orthogonal to the problem of giving explicit descriptions of examples. If one had an explicit IRS that is not co-sofic, it will still be extremely hard to prove it satisfies the determinant conjecture. On the other hand, the undecidability result gives it to us easily. See also \cite{lyons_ineffectiveness_2025} for another application of Aldous-Lyons that needed the undecidability result; an explicit example wouldn't have sufficed.

Section~\ref{sec:3.1} serves as a proof of concept.  
Suppose there exists a ``nice'' class of algebras (resp.\ IRSs) that:
\begin{enumerate}
    \item admits nice upper bounds, and
    \item contains all finite-dimensional tracial von Neumann algebras (resp.\ IRSs).
\end{enumerate}
Then $\mathsf{MIP}^* = \mathsf{RE}$
 implies that this class must contain a non--Connes embeddable algebra (resp.\ IRS).  
Similarly, by $\mathsf{MIP}^{\mathsf{co}} = \mathsf{coRE}$
, if a ``nice'' class admits computable lower bounds, it cannot contain \textbf{every} tracial von Neumann algebra (resp.\ IRS). This means the class of all tracial von Neumann algebras (resp. IRS) has no ``nice'' approximation property that encompasses the whole class. The IRSs satisfying the determinant conjecture fall in the former category, which allows us to get the result. See Proposition \ref{prop:mip-nonemb} for the formal statement.

In Section \ref{sec:2} we extend the determinant conjecture to IRSs. Section \ref{sec:3} develops the non-local game framework and proves Theorem \ref{thm:1.3}, while Section \ref{sec:4} discusses broader implications.

Section~\ref{sec:3.1} develops a NPA hierarchy for strategies coming from IRSs satisfying the determinant conjecture, which forms the technical core of the paper. This hierarchy not only yields Theorem~\ref{thm:1.3}, but also provides a template for applying non-local game techniques to other approximation problems involving IRSs and traces/von Neumann algebras.

\subsection*{Acknowledgements}
    I would like to thank Michael Brannan for discussions on these results and supervision. I would also like to thank Georges Skandalis and Balci for sharing Balci's PhD thesis with me, which helped inspire the NPA hierarchy in this paper. I would also like to thank Mustafa Nawaz for proofreading this paper. Any errors 
    are my own.

\section{Invariant Random Subgroups}\label{sec:2}

In this section we introduce the basic terminology for \textbf{invariant random subgroups} (IRSs) and recall how they give rise to canonical traces and von Neumann algebras. 
For a more detailed discussion, see \cite[Section~2]{manzoor_there_2025}.

\begin{definition}
    Let $\Gamma$ be a discrete group.  
    Denote by $\sub(\Gamma)$ the space of subgroups of $\Gamma$, topologized as a subset of $\{0,1\}^\Gamma$.  
    A \textbf{random subgroup} of $\Gamma$ is a Borel probability measure on $\sub(\Gamma)$.  
    If the measure is invariant under the conjugation action of $\Gamma$ on $\sub(\Gamma)$, we call it an \textbf{invariant random subgroup} (IRS).  
    We write $\IRS(\Gamma)$ for the convex set of IRSs on $\Gamma$.
\end{definition}

\begin{example}
    If $N \trianglelefteq \Gamma$ is a normal subgroup, then the Dirac measure $\delta_N$ is an IRS.
\end{example}

Thus, IRSs generalize the notion of normal subgroups.  
In what follows we focus on IRSs of free groups.  
Since normal subgroups of a free group correspond to all finitely generated groups via quotients, IRSs of a free group can be viewed as a generalized notion of a group, obtained through a probabilistic ``quotient'' construction (see Definition~\ref{def:1.11}).

\begin{example}
    Let $\alpha \colon \Gamma \to S_n$ be an action of $\Gamma$ on a finite set of $n$ elements.  
    Define the IRS $H_\alpha$ by
    \[
        H_\alpha(A) = \frac{\#\{\,i \in [n] : \Stab(i) \in A \,\}}{n} \quad \text{for } A \subset \sub(\Gamma).
    \]
    In words: choose a random point in $[n]$, and take its stabilizer.  
    Such IRSs are called \textbf{finitely described}.
\end{example}

The space $\sub(\Gamma)$ is compact in the topology inherited from $\{0,1\}^\Gamma$, so weak$^*$ convergence of measures on $\sub(\Gamma)$ makes sense.

\begin{example}
    An IRS $H \in \IRS(\Gamma)$ is called \textbf{co-sofic} if it is the weak$^*$ limit of finitely described IRSs.
\end{example}

\begin{proposition}\cite[Lemma 16]{abert_rank_2017}\label{prop:1.14}
    Let $\Gamma$ be a free group and $N \trianglelefteq \Gamma$.  
    Then $\delta_N$ is co-sofic as an IRS if and only if $\Gamma/N$ is sofic as a group.
\end{proposition}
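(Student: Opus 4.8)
The plan is to unwind both sides into the common combinatorial language of permutation actions on finite sets, and then translate back and forth, using crucially that $\Gamma$ is free. First I would reformulate co-soficity concretely. The topology on $\sub(\Gamma)$ is generated by the cylinder sets $\{H : H \cap F = N \cap F\}$ for finite $F \subseteq \Gamma$, and $\delta_N$ is the point mass at $N$, giving these sets measure $1$. Hence a sequence of finitely described IRSs $H_{\alpha_k}$, arising from actions $\alpha_k \colon \Gamma \to S_{n_k}$, converges weak$^*$ to $\delta_N$ if and only if for every finite $F \subseteq \Gamma$,
\[
\frac{\#\{\, i \in [n_k] : \Stab(i) \cap F = N \cap F \,\}}{n_k} \longrightarrow 1 .
\]
Spelling out the condition on a point $i$: it is counted exactly when $\alpha_k(g)$ fixes $i$ for every $g \in F \cap N$ and moves $i$ for every $g \in F \setminus N$. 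On the other side, soficity of $Q = \Gamma/N$ means that for every finite $S \subseteq Q$ and $\epsilon > 0$ there is a map $\sigma \colon Q \to S_n$ with $\sigma(e)=\mathrm{id}$ that is almost multiplicative and almost free on $S$ in the normalized Hamming distance. The bridge is the quotient map $\pi \colon \Gamma \to Q$ together with the fact that a homomorphism out of a free group is prescribed by \emph{arbitrary} images of the generators.

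For the implication ``$Q$ sofic $\Rightarrow \delta_N$ co-sofic,'' I would start from a sofic approximation $\sigma \colon Q \to S_n$ and use freeness to upgrade this almost-homomorphism to an honest action: writing $\Gamma$ as free on a generating set $X$, define $\alpha \colon \Gamma \to S_n$ by $\alpha(x) = \sigma(\pi(x))$ for $x \in X$ and extend uniquely to a homomorphism. This is precisely where the hypothesis that $\Gamma$ is free is essential. By applying almost multiplicativity repeatedly along the word representing $g$, one gets $\alpha(g)$ close in Hamming distance to $\sigma(\pi(g))$ for every $g$ of bounded length, provided $\sigma$ was chosen good enough on a large enough finite subset of $Q$ (containing the images under $\pi$ of all prefixes of the words representing elements of $F$). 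Then for $g \in F \cap N$ we have $\pi(g)=e$, so $\alpha(g) \approx \sigma(e)=\mathrm{id}$ fixes almost all points; for $g \in F \setminus N$ we have $\pi(g)\neq e$, so almost freeness forces $\alpha(g) \approx \sigma(\pi(g))$ to move almost all points. Intersecting over the finitely many $g \in F$, almost every $i$ satisfies $\Stab(i) \cap F = N \cap F$, which is exactly the convergence criterion above.

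Conversely, to show ``$\delta_N$ co-sofic $\Rightarrow Q$ sofic,'' suppose $H_{\alpha_k} \to \delta_N$. Given a target finite $S \subseteq Q$ and $\epsilon$, I would choose a finite $F \subseteq \Gamma$ containing a fixed lift $\tilde q$ of each $q \in S$ (with $\tilde e = e$) together with all the correction elements $n_{q,q'} = (\widetilde{qq'})^{-1}\,\tilde q\,\tilde q' \in N$ for $q,q' \in S$, and take $k$ large so that the good-point fraction for $F$ exceeds $1-\epsilon'$. Define $\sigma \colon Q \to S_{n_k}$ by $\sigma(q) = \alpha_k(\tilde q)$. Then $\sigma(q)\sigma(q') = \alpha_k(\tilde q\,\tilde q') = \sigma(qq')\,\alpha_k(n_{q,q'})$, and since $n_{q,q'} \in F \cap N$ fixes almost every good point, $\sigma$ is almost multiplicative on $S$; likewise, for $q \neq e$ the lift $\tilde q \in F \setminus N$ moves almost every good point, giving almost freeness. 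Hence $\sigma$ is a sofic approximation and $Q$ is sofic.

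The conceptual heart is the single use of freeness to lift an almost-homomorphism to a genuine action; everything else is quantifier bookkeeping. The step I expect to require the most care is controlling the accumulation of Hamming error when rewriting $\alpha(g)$ as $\sigma(\pi(g))$ in the backward direction, since each application of almost multiplicativity contributes error: one must first fix the word-length bound imposed by $F$, then demand that the sofic approximation be accurate on the finite set of all prefixes involved and with $\epsilon$ small enough that the total error stays below the target. Matching the two parametrizations — finite windows $F \subseteq \Gamma$ on the IRS side against finite sets $S \subseteq Q$ with tolerance $\epsilon$ on the soficity side — is where the real content lies, but it becomes routine once the dictionary above is in place.
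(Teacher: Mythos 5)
The paper offers no proof of this proposition---it is imported verbatim from the cited reference---so there is nothing internal to compare against. Your argument is correct and is the standard proof of this equivalence: the cylinder-set reformulation of weak$^*$ convergence to $\delta_N$, the use of freeness to promote a sofic approximation of $\Gamma/N$ to an honest action of $\Gamma$ in one direction, and the passage to coset representatives with correction elements in $N$ (which fix almost all good points) in the other are all sound, including the error accumulation you flag as the delicate step.
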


\medskip

To connect IRSs to operator algebras we pass through traces on group algebras.  
A \textbf{trace} on $\Gamma$ (also called a \textbf{character}) is a conjugation-invariant, positive-definite function $\tau \colon \Gamma \to \mathbb{C}$ with $\tau(e)=1$.  
Equivalently, traces correspond to tracial states on the universal group $C^*$-algebra $C^*(\Gamma)$.

    The \textbf{full group $C^*$-algebra} $C^*(\Gamma)$ is the unique unital $C^*$-algebra generated by unitaries $\{u_g : g \in \Gamma\}$ subject to the relations $u_g u_h = u_{gh}$, with the universal property that any unitary representation $\pi \colon \Gamma \to U(\mathcal{H})$ extends uniquely to a $*$-homomorphism $C^*(\Gamma)\to B(\mathcal{H})$.  
    A \textbf{tracial state} on $C^*(\Gamma)$ is a linear map $\tau:C^*(\Gamma)\to \mathbb{C}$ such that $\tau(a^*a)\geq 0$ and $\tau(ab)=\tau(ba)$ for all $a,b$.

Given a tracial state $\tau$ on $C^*(\Gamma)$, the \textbf{Gelfand–Naimark–Segal (GNS) construction} produces:
\begin{itemize}
    \item a Hilbert space $\mathcal{H}_\tau$,
    \item a unitary representation $\pi_\tau : \Gamma \to U(\mathcal{H}_\tau)$,
    \item and a cyclic vector $\xi_\tau \in \mathcal{H}_\tau$
\end{itemize}
such that 
\[
    \tau(g) = \bra{\xi_\tau}\pi_\tau(g)\ket{\xi_\tau}.
\]
The von Neumann algebra generated by the image $\pi_\tau(C^*(\Gamma))''\subset B(\mathcal{H}_\tau)$ then carries the unique extension of the trace $\tau$, and is called the \textbf{tracial von Neumann algebra associated to $\tau$}. 

\medskip

Now let $H \in \IRS(\Gamma)$.  
There is a canonical way to define a trace from $H$: 

\begin{definition}
    For $g \in \Gamma$, set
    \[
        \tau_H(g) := \mathbb{P}(g \in H).
    \]
\end{definition}

This $\tau_H$ extends to a tracial state on $C^*(\Gamma)$; invariance of $H$ ensures $\tau_H$ is a trace, and positivity follows from the GNS construction \cite[Proposition~2.11]{manzoor_there_2025}.  
The associated von Neumann algebra captures the idea of ``quotienting by an IRS.''

\begin{definition}\label{def:1.11}
    Let $\Gamma$ be a discrete group and $H \in \IRS(\Gamma)$.  
    The \textbf{quotient of $\Gamma$ by $H$} is the tracial von Neumann algebra
    \[
        L(\Gamma/H) := \pi_{\tau_H}(C^*(\Gamma))''.
    \]
\end{definition}

If $H = \delta_N$ for some normal subgroup $N \trianglelefteq \Gamma$, then $L(\Gamma/H) = L(\Gamma/N)$, the usual group von Neumann algebra of the quotient.

\medskip

Soficity for IRSs has an analogue in \textbf{hyperlinearity}, corresponding to Connes embeddability of $L(\Gamma/H)$. Roughly speaking soficity is approximation by permutation matrices, while hyperlinearity/Connes embeddebility is approximation by any matrices. See \cite[Section 2.2]{manzoor_there_2025} for detailed discussions.
The following characterizations clarify the trace perspective:

\begin{proposition}[\cite{brown_invariant_2006}]
    Let $\Gamma$ be a free group, and let $\tau$ be a trace on $\Gamma$.  
    The following are equivalent:
    \begin{enumerate}
        \item $\pi_\tau(C^*(\Gamma))''$ is Connes embeddable.
        \item $\tau$ is amenable as a trace.
        \item $\tau$ is the weak$^*$ limit of traces with finite-dimensional GNS representations.
    \end{enumerate}
\end{proposition}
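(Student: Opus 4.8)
The plan is to prove the cycle $(3)\Rightarrow(1)\Rightarrow(2)\Rightarrow(3)$, using the general theory of amenable traces for the link $(1)\Leftrightarrow(2)$ and reserving the freeness of $\Gamma$ for the passage to genuine finite-dimensional representations in $(2)\Rightarrow(3)$. For $(3)\Rightarrow(1)$, suppose $\tau = \lim_n \tau_n$ weak$^*$, where each $\tau_n$ has finite-dimensional GNS representation, so that $\tau_n(a) = \mathrm{tr}_{d_n}(\rho_n(a))$ for some $*$-representation $\rho_n\colon C^*(\Gamma)\to M_{d_n}(\mathbb{C})$ with $\mathrm{tr}_{d_n}$ the normalized trace. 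Each $M_{d_n}(\mathbb{C})$ embeds trace-preservingly into the hyperfinite $\mathrm{II}_1$ factor $R$, so fixing a free ultrafilter $\omega$ and assembling the $\rho_n$ into a single $*$-homomorphism $C^*(\Gamma)\to \prod_\omega M_{d_n}(\mathbb{C}) \hookrightarrow R^\omega$ recovers $\tau$ as the restriction of the ultrapower trace. This extends to a trace-preserving embedding of $\pi_\tau(C^*(\Gamma))''$ into $R^\omega$, giving $(1)$.

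For $(1)\Rightarrow(2)$, a trace-preserving embedding $\iota\colon \pi_\tau(C^*(\Gamma))''\hookrightarrow R^\omega$ lets me transport the hypertrace of the hyperfinite (hence amenable) factor $R^\omega$ back to $C^*(\Gamma)$: precomposing an $R^\omega$-central state on the ambient $B(\mathcal{H})$ with $\iota\circ\pi_\tau$ produces a state on $B(\mathcal{H}_\tau)$ extending $\tau$ and invariant under conjugation by $\pi_\tau(\Gamma)$, which is exactly amenability of $\tau$. This is one half of the general principle identifying Connes embeddability of the GNS algebra with amenability of the trace, so I do not need the converse separately for the cycle.

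The crux is $(2)\Rightarrow(3)$, where I expect the main obstacle and where freeness of $\Gamma$ is essential. Amenability of $\tau$ supplies a net of unital completely positive maps $\phi_n\colon C^*(\Gamma)\to M_{k_n}(\mathbb{C})$ that are asymptotically multiplicative in the trace $2$-norm and asymptotically trace-preserving, $\mathrm{tr}_{k_n}\circ\phi_n \to \tau$. The difficulty is that $\phi_n$ is only approximately a $*$-homomorphism, so $\mathrm{tr}_{k_n}\circ\phi_n$ is in general neither a character nor a trace with finite-dimensional GNS representation. Freeness is what rescues the argument: for each free generator $s$ of $\Gamma$ the image $\phi_n(u_s)$ is a near-unitary, which I can correct to an honest unitary $V_s^{(n)}\in U(k_n)$ (via polar decomposition together with functional calculus) with $\|V_s^{(n)} - \phi_n(u_s)\|_2 \to 0$. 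Because $\Gamma$ is free there are no relations to verify, so the assignment $s\mapsto V_s^{(n)}$ extends uniquely to a bona fide finite-dimensional unitary representation $\rho_n\colon\Gamma\to U(k_n)$, hence to a $*$-representation of $C^*(\Gamma)$ whose character $\tau_n := \mathrm{tr}_{k_n}\circ\rho_n$ has finite-dimensional GNS representation. The remaining work is to propagate the generator-level estimate to arbitrary group elements: for a fixed word $w=s_1\cdots s_\ell$, a telescoping estimate bounds $\|\rho_n(w) - \phi_n(u_w)\|_2$ by the accumulated per-generator errors together with the approximate-multiplicativity defects, all of which vanish along the net for fixed $\ell$, so $\tau_n(w)=\mathrm{tr}_{k_n}(\rho_n(w)) \to \tau(w)$ and thus $\tau_n \to \tau$ weak$^*$, establishing $(3)$. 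The main technical care is precisely this propagation while keeping the $2$-norm errors controlled uniformly enough on each fixed element.
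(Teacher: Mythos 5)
The paper does not actually prove this proposition: it is quoted verbatim from \cite{brown_invariant_2006}, so your attempt can only be measured against the standard arguments in that reference. Your cycle $(3)\Rightarrow(1)\Rightarrow(2)\Rightarrow(3)$ is the right architecture, and two of the three legs are essentially the standard proofs and are sound: $(3)\Rightarrow(1)$ (embed each finite-dimensional tracial GNS algebra trace-preservingly into $R$, assemble into the tracial ultraproduct, take the weak closure), modulo the minor point that a trace with finite-dimensional GNS representation is a weighted combination of normalized traces on matrix blocks rather than a single $\mathrm{tr}_{d_n}\circ\rho_n$ (this does not affect the embedding into $R$); and $(2)\Rightarrow(3)$, where your use of freeness — correct the near-unitary images of the free generators by polar decomposition, let the absence of relations produce an honest finite-dimensional representation, and telescope the $2$-norm errors over fixed words — is exactly the standard free-group argument and is where the hypothesis on $\Gamma$ genuinely enters.

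The genuine gap is in $(1)\Rightarrow(2)$. You call $R^\omega$ ``hyperfinite (hence amenable)'' and invoke its hypertrace. This is false: by Connes' theorem a $\mathrm{II}_1$ factor admits a hypertrace if and only if it is hyperfinite, and $R^\omega$ is not hyperfinite — it contains non-amenable subfactors such as $L(F_2)$, which is the entire point of Connes embeddability. So there is no $R^\omega$-central state on the ambient $B(\mathcal{H})$ to transport, and the step collapses as written. The implication itself is true, but the standard route is different: one shows that the ultraproduct trace on $R^\omega$ restricts to an amenable trace on every separable unital $C^*$-subalgebra by lifting the embedding to u.c.p.\ maps into $\ell^\infty(\mathbb{N},R)$ (choice of representatives plus a convexity argument) and then compressing into finite-dimensional subfactors of $R$, using hyperfiniteness of $R$ itself; this yields the asymptotically multiplicative, asymptotically trace-preserving u.c.p.\ maps into matrix algebras that characterize amenability of $\tau$. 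In short, the hypertrace you need lives on $R$, not on $R^\omega$, and the argument must pass through the local (u.c.p.) characterization of amenable traces rather than a global central state.
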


\begin{definition}
    An IRS $H$ of a free group $\Gamma$ is \textbf{co-hyperlinear} if $L(\Gamma/H)$ is Connes embeddable; equivalently, if $\tau_H$ is amenable as a trace.
\end{definition}

\subsection{L\"uck's Determinant Conjecture for IRSs}

Let $\Gamma$ be a discrete group and $H \in \IRS(\Gamma)$.  
Define $\mathbb{Z}[\Gamma/H] \subset L(\Gamma/H)$ as the image of the $*$-subalgebra $\mathbb{Z}[\Gamma] \subset C^*(\Gamma)$ under the GNS representation associated to $\tau_H$.  
If $H = \delta_N$ for some normal subgroup $N \trianglelefteq \Gamma$, then $\mathbb{Z}[\Gamma/H] = \mathbb{Z}[\Gamma/N] \subset L(\Gamma/N)$.

\begin{definition}
    Let $\Gamma$ be a discrete group and $H$ an IRS on it.  
    We say that $H$ \textbf{satisfies the determinant conjecture} if
    \[
        (\tau_H \otimes \tr_n)\big(\ln_+(A^*A)\big) \geq 0
        \quad \forall\, A \in M_n(\mathbb{Z}[\Gamma/H]), \ n \in \mathbb{N},
    \]
    where $\tr_n$ is the normalized trace on $M_n(\mathbb{C})$ and
    \[
        \ln_+(t) =
        \begin{cases}
            0 & t = 0 \\
            \ln(t) & t > 0.
        \end{cases}
    \]
\end{definition}

This condition depends only on the trace $\tau_H$:

\begin{definition}[\cite{balci_traces_2016}]
    Let $\tau$ be a trace on $C^*(\Gamma)$ where $\Gamma$ is a discrete group.  
    We say that $\tau$ satisfies the determinant conjecture if
    \[
        (\tau \otimes \tr_n)\big(\ln_+(A^*A)\big) \geq 0
        \quad \forall\, A \in M_n(\mathbb{Z}[\Gamma]), \ n \in \mathbb{N}.
    \]
\end{definition}

\begin{proposition}
    Let $\Gamma$ be a discrete group and $H \in \IRS(\Gamma)$.
    \begin{enumerate}
        \item If $H = \delta_N$ for some normal subgroup $N$, then $H$ satisfies the determinant conjecture as an IRS if and only if $\Gamma/N$ satisfies it as a group.
        \item $H$ satisfies the determinant conjecture as an IRS if and only if $\tau_H$ satisfies it as a trace.
    \end{enumerate}
\end{proposition}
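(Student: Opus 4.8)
The plan is to reduce both parts to a single \emph{functoriality} principle: the Fuglede--Kadison quantity $(\tau\otimes\tr_n)\big(\ln_+(A^*A)\big)$ depends on $A$ and $\tau$ only through the $\tau$-spectral distribution of $A^*A$ (defined via the GNS construction of $\tau$), and this distribution is insensitive to pushing $A$ forward along a trace-preserving surjective $*$-homomorphism. Granting this, part~(2) is the special case of the GNS map $\mathbb{Z}[\Gamma]\twoheadrightarrow\mathbb{Z}[\Gamma/H]$, while part~(1) follows by unwinding the identification $\mathbb{Z}[\Gamma/H]=\mathbb{Z}[\Gamma/N]$ recorded above (or, equivalently, by applying the principle a second time to the augmentation-type surjection induced by $\Gamma\to\Gamma/N$).

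The technical core is a lemma I would isolate first. Let $\phi\colon R\to S$ be a surjective $*$-homomorphism of $*$-algebras, let $\sigma$ be a trace on $S$, and set $\tau:=\sigma\circ\phi$. Writing $\phi^{(n)}$ for the entrywise extension of $\phi$ to $M_n$, one has $\phi^{(n)}(A)^*\phi^{(n)}(A)=\phi^{(n)}(A^*A)$, so for every $k$,
\[
(\tau\otimes\tr_n)\big((A^*A)^k\big)
=(\sigma\otimes\tr_n)\big(\phi^{(n)}\big((A^*A)^k\big)\big)
=(\sigma\otimes\tr_n)\big((\phi^{(n)}(A)^*\phi^{(n)}(A))^k\big),
\]
using $\tau=\sigma\circ\phi$ and the homomorphism property. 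Hence the spectral distributions $\mu_{A^*A}$ (w.r.t.\ $\tau\otimes\tr_n$) and $\mu_{\phi(A)^*\phi(A)}$ (w.r.t.\ $\sigma\otimes\tr_n$) have the same moments. Both are supported in the fixed compact interval $[0,\lVert A^*A\rVert]$, since the GNS representations and $\phi$ are contractive for the ambient $C^*$-norm, and a compactly supported measure is determined by its moments; therefore $\mu_{A^*A}=\mu_{\phi(A)^*\phi(A)}$. Integrating $\ln_+$ against these equal measures yields
\[
(\tau\otimes\tr_n)\big(\ln_+(A^*A)\big)
=(\sigma\otimes\tr_n)\big(\ln_+(\phi^{(n)}(A)^*\phi^{(n)}(A))\big).
\]
Finally, surjectivity of $\phi$ lifts to surjectivity of $\phi^{(n)}\colon M_n(R)\to M_n(S)$, so as $A$ ranges over $M_n(R)$ the right-hand argument ranges over all of $M_n(S)$. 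Thus $\tau$ satisfies the determinant conjecture if and only if $\sigma$ does.

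With the lemma in hand, part~(2) is immediate: take $\phi=\pi_{\tau_H}|_{\mathbb{Z}[\Gamma]}\colon\mathbb{Z}[\Gamma]\to\mathbb{Z}[\Gamma/H]$, which is surjective by the definition of $\mathbb{Z}[\Gamma/H]$ as the image, and let $\sigma$ be the trace on $L(\Gamma/H)$ induced by the GNS construction, so $\sigma\circ\phi=\tau_H$; the lemma then equates the IRS condition (ranging over $A\in M_n(\mathbb{Z}[\Gamma/H])$) with the trace condition (ranging over $A\in M_n(\mathbb{Z}[\Gamma])$). For part~(1) with $H=\delta_N$, I would use the identifications $\mathbb{Z}[\Gamma/H]=\mathbb{Z}[\Gamma/N]$ and $L(\Gamma/H)=L(\Gamma/N)$ together with the computation $\tau_H(g)=\mathbb{P}(g\in N)=\mathbf{1}[g\in N]=\mathbf{1}[gN=e]$, which shows the induced trace is the canonical trace of $\Gamma/N$; the IRS condition for $\delta_N$ is then \emph{verbatim} the group determinant conjecture for $\Gamma/N$.

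The step needing the most care is the passage $\ln_+(A^*A)\mapsto\ln_+(\phi(A)^*\phi(A))$, because $\ln_+$ is discontinuous at $0$ and unbounded below near $0$, so a priori the quantity lies in $[-\infty,+\infty)$ rather than in $\mathbb{R}$. The resolution is that \emph{equality of the two compactly supported spectral measures} forces the two extended-real integrals to agree whether or not they are finite: the convention $\ln_+(0)=0$ absorbs any atom at $0$, and $\ln_+$ is bounded above on the common support. I would therefore phrase the conclusion as an equality in $[-\infty,+\infty)$, so that the sign condition $\geq 0$ transfers cleanly across $\phi$.
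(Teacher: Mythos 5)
The paper states this proposition without proof, treating it as immediate from the definitions, so there is no written argument to compare against; your write-up is a correct and essentially complete way to supply the missing details. Your reduction to a single functoriality lemma --- that the spectral distribution of $A^*A$ with respect to $\tau\otimes\tr_n$ is unchanged when $A$ is pushed forward along a surjective trace-preserving $*$-homomorphism, verified by matching moments of compactly supported measures --- is exactly the mechanism implicit in the paper's setup: $\mathbb{Z}[\Gamma/H]$ is \emph{defined} as the image of $\mathbb{Z}[\Gamma]$ under $\pi_{\tau_H}$, so the surjectivity you need is automatic, and for $H=\delta_N$ the computation $\tau_{\delta_N}(g)=\mathbf{1}[g\in N]$ identifies the induced trace with the canonical trace on $L(\Gamma/N)$, making part (1) verbatim the group statement. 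Your care about the $\ln_+$ integral possibly taking the value $-\infty$ is also the right level of rigor. The one point worth tightening is that the lemma, as stated for abstract $*$-algebras $R$ and $S$, implicitly requires both to sit inside C$^*$-algebras with $\phi$ contractive (here $\mathbb{Z}[\Gamma]\subset C^*(\Gamma)$ and $\mathbb{Z}[\Gamma/H]\subset L(\Gamma/H)$, with $\phi$ the restriction of the GNS representation) so that the spectral measures exist and share a compact support on which the moment problem is determinate; in the application this is automatic, but the hypothesis should be recorded in the lemma rather than left tacit.
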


As sofic groups satisfy the determinant conjecture, it would be a reasonable guess that the property also holds for co-sofic IRSs in free groups.  Indeed, this essentially appears in \cite{balci_traces_2016} in different language:

\begin{lemma}\label{lem:2.13}
    Let $\Gamma$ be a discrete group and $H$ a co-sofic IRS on it.  
    Then $H$ satisfies the determinant conjecture.
\end{lemma}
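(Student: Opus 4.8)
The plan is to prove the determinant conjecture for $\tau_H$ by approximation, reducing everything to the elementary fact that integer matrices have modified determinant at least $1$. By the second part of the preceding proposition it suffices to verify the inequality for the trace $\tau_H$. Fix $A \in M_n(\mathbb{Z}[\Gamma])$ and let $\mu$ denote the spectral distribution of the positive element $A^*A$ with respect to $(\tau_H \otimes \tr_n)$, so that $(\tau_H \otimes \tr_n)(\ln_+(A^*A)) = \int \ln_+ \, d\mu$. Since $H$ is co-sofic, write $H = \lim_k H_k$ in the weak$^*$ topology with each $H_k$ finitely described, and let $\mu_k$ be the analogous spectral distribution computed from $\tau_{H_k}\otimes \tr_n$. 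The strategy has two parts: first establish $\int \ln_+ \, d\mu_k \ge 0$ for every finitely described $H_k$, then pass to the weak limit.

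For the base case, suppose $H_k = H_\alpha$ arises from an action $\alpha\colon \Gamma \to S_N$. Then $\tau_{H_\alpha}(g)$ equals the normalized trace of the permutation matrix $\alpha(g)$, so the trace factors through the permutation representation on $\mathbb{C}^N$ and $\tau_{H_\alpha}\otimes \tr_n$ becomes $\tfrac{1}{nN}\tr_{M_{nN}}$. Under this representation $A$ maps to an honest integer matrix $\tilde A \in M_{nN}(\mathbb{Z})$, since permutation matrices have entries in $\{0,1\}$ and $A$ has integer coefficients; consequently $\mu_k$ is the eigenvalue distribution of $\tilde A^*\tilde A$. I would then invoke the standard number-theoretic fact that the product of the nonzero eigenvalues of $\tilde A^*\tilde A$ is a positive integer, being (up to sign) the lowest nonvanishing coefficient of the integer characteristic polynomial of $\tilde A^*\tilde A$, hence at least $1$. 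Therefore $\int \ln_+ \, d\mu_k = \tfrac{1}{nN}\log\!\big(\prod_{\lambda>0}\lambda\big) \ge 0$. Note this is exactly where integrality of the coefficients is used.

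For the limiting step, I would first record that all $\mu_k$ and $\mu$ are supported in the fixed interval $[0,R]$ with $R = \|A\|_{M_n(C^*(\Gamma))}^2$, since each representation has norm dominated by the universal $C^*$-norm. Weak$^*$ convergence $H_k \to H$ gives $\tau_{H_k}(g)\to \tau_H(g)$ for every $g$ (the set $\{K : g \in K\}$ is clopen in $\sub(\Gamma)$), so every moment converges, $\int t^m \, d\mu_k \to \int t^m \, d\mu$, and by the method of moments on a compact interval $\mu_k \to \mu$ weakly. The \textbf{main obstacle} is transferring the inequality across this limit: $\ln_+$ is discontinuous and unbounded below at $0$, so weak convergence does not yield the integral directly. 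This is the familiar difficulty in L\"uck-type approximation, and I would resolve it by exploiting that $\ln_+$ is upper semicontinuous on $[0,\infty)$ (its only subtlety is at $0$, where the value jumps up to $0$ while $\lim_{t\to 0^+}\log t = -\infty$) and bounded above by $\log R$. Truncating from below, set $f_M = \max(\ln_+, -M)$, which is bounded and upper semicontinuous, so the portmanteau inequality gives $\int f_M \, d\mu \ge \limsup_k \int f_M \, d\mu_k$. As $f_M \ge \ln_+$, each $\int f_M \, d\mu_k \ge \int \ln_+ \, d\mu_k \ge 0$, whence $\int f_M \, d\mu \ge 0$ for every $M$; letting $M \to \infty$, monotone convergence forces $\int \ln_+ \, d\mu = \inf_M \int f_M \, d\mu \ge 0$ (and in particular rules out the value $-\infty$). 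The conceptual point worth emphasizing is that the inequality runs in the favorable direction: singular values collapsing to $0$ in the limit can only raise the limiting log-determinant, never lower it, which is precisely why co-soficity suffices.
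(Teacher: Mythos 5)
Your proposal is correct and follows essentially the same route as the paper: reduce the finitely described case to the fact that the product of the nonzero eigenvalues of a positive-semidefinite integer matrix is a positive integer, then pass the inequality through the weak$^*$ limit. The only difference is that the paper compresses the limiting step into one line (``the inequality then passes to the limit''), whereas you correctly supply the missing details — uniform support, moment convergence, upper semicontinuity of $\ln_+$, truncation, and the portmanteau/monotone-convergence argument — which is exactly the standard L\"uck-approximation mechanism the paper is implicitly invoking.
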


\begin{proof}
    First consider a finitely described IRS.  
    Let $\alpha \colon \Gamma \to S_n$ induce the IRS $H$.  
    Then $\tau_H = \tr_n \circ \alpha$, where $S_n$ is regarded as a subset of $M_n(\mathbb{C})$.  
    If $A \in M_k(\mathbb{Z}[\Gamma])$, then $\alpha(A) \in M_{nk}(\mathbb{Z})$, and
    \[
        (\tau_H \otimes \tr_k)\big(\ln_+(A^*A)\big)
        = \tr_{nk}\big(\ln_+(\alpha(A)^*\alpha(A))\big).
    \]
    Exponentiating, this becomes the product of the non-zero eigenvalues of $\alpha(A)^*\alpha(A)$, i.e., the smallest non-zero coefficient of its characteristic polynomial.  
    Since $\alpha(A)^*\alpha(A)$ is positive and has integer entries, this product is at least $1$, hence $(\tau_H \otimes \tr_k)(\ln_+(A^*A)) \geq 0$.

    Now suppose $H_n \to H$ in the weak$^*$ topology, where each $H_n$ is finitely described.  
    The map $H \mapsto \tau_H$ is $w^*\!-\!w^*$ continuous from $\IRS(\Gamma)$ to the space of traces $T(\Gamma)$, so $\tau_{H_n} \to \tau_H$ pointwise.  
    The inequality then passes to the limit.
\end{proof}

This lemma is actually integral in our proof of Theorem \ref{thm:1.3}, as we need the permutation strategies to also be Det-IRS strategy (see Definition \ref{def:3.2} (v) and Proposition \ref{prop:mip-nonemb} (B)).

Let $H\in \IRS(\Gamma)$ for discrete group $\Gamma$. Note there is a probability measure preserving equivalence relation on standard Borel space $\mathcal{R}_H$ so that $L(\Gamma/H)\subset L(\mathcal{R}_H)$ with $\mathbb{Z}(\Gamma/H)\subset \mathbb{Z}(\mathcal{R}_H)$ \cite[proposition 2.11]{manzoor_there_2025}. In particular if $\mathcal{R}_H$ satisfies L\"uck's determinant conjecture in the sense of \cite{lueck_l2-torsion_2010}, then so does $H$. This situates the IRS version within the broader framework of measured group theory: it shows that evidence for the IRS determinant conjecture also points towards the validity of L\"uck’s conjecture for measured equivalence relations.

While an IRS satisfying the determinant conjecture need not force the associated equivalence relation to do so, the only obstruction arises from the intermediate $L^\infty(X)$ algebra. This motivates the following conjecture:
\begin{conjecture}\label{conj:2.14}
    Suppose $\Gamma \overset{\alpha}{\curvearrowright} (X,\mu)$ is a probability measure preserving action. If $H_\alpha$ satisfies the determinant conjecture as an IRS, so does the orbit equivalence relation $\mathcal{R}_\alpha.$
\end{conjecture}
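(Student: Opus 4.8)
The plan is to work in the \emph{orbital} (fibered) picture of $L(\mathcal{R}_\alpha)$ and to isolate exactly what the IRS hypothesis already supplies. Writing $L(\mathcal{R}_\alpha)=L^\infty(X)\rtimes\Gamma$, an element $T\in\mathbb{Z}[\mathcal{R}_\alpha]$ has the form $T=\sum_{g\in F}f_g u_g$ with $F\subset\Gamma$ finite and each $f_g\in L^\infty(X)$ a $\mathbb{Z}$-valued step function. For a.e.\ $x$, $T$ acts on $\ell^2(\Gamma x)\cong\ell^2(\Gamma/\Stab(x))$ as an \emph{integer} operator $T(x)$, and the determinant-conjecture quantity $(\tau_{\mathcal{R}_\alpha}\otimes\tr_n)\big(\ln_+(T^*T)\big)$ decomposes as an integral over $X$ of a per-orbit log-spectral contribution of $T(x)$. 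In this language $\mathbb{Z}[\Gamma/H_\alpha]$ is exactly the subfamily whose coefficients $f_g$ are \emph{constant}, i.e.\ the $\Gamma$-equivariant (convolution-type) integer operators on orbits, and the hypothesis that $H_\alpha$ satisfies the determinant conjecture is precisely the statement that this integral is $\geq 0$ for that subfamily. First I would record this reformulation, so the target becomes: \textbf{upgrade the inequality from equivariant integer operators on orbits to arbitrary ones}, the gap being entirely the non-constant (Cartan) part of the coefficients.

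Second, I would attempt a finite-partition reduction of the $L^\infty(X)$ dependence. The coefficients $f_g$ of a given $T$ are constant on the atoms of a common finite partition $\mathcal{P}=\{P_1,\dots,P_m\}$; via the itinerary map $x\mapsto(i:g^{-1}x\in P_i)_{g\in\Gamma}$ one factors the action onto a $\Gamma$-subshift of $[m]^\Gamma$ in which the projections $\chi_{P_i}$ become cylinder functions. The aim is to replace the continuous base $X$ by a combinatorial model on which the partial dynamics is finitary: on a genuinely \emph{finite} model (a finitely described IRS, i.e.\ a finite permutation action) the $T(x)$ are finite integer matrices, the product of the nonzero eigenvalues of $T(x)^*T(x)$ is a positive integer, and the inequality holds by the exact argument of Lemma~\ref{lem:2.13}. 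One would then try to pass from finite models back to $\mathcal{R}_\alpha$ by a weak$^*$/approximation argument, using continuity of $H\mapsto\tau_H$ and the fact that the IRS hypothesis pins down the equivariant part of the limit.

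The \textbf{main obstacle}, which is exactly the $L^\infty(X)$ gap flagged before the conjecture, is that the $\Gamma$-action preserves \emph{no} finite sub-$\sigma$-algebra, so the itinerary model does not close up: multiplying elements of $\mathbb{Z}[\mathcal{R}_\alpha]$ translates the step-function coefficients and refines $\mathcal{P}$ without bound, and the combinatorial models are honest finite permutation actions only when the dynamics is finite, i.e.\ only when $\mathcal{R}_\alpha$ is sofic. Since the whole point of Theorem~\ref{thm:1.3} is to produce IRSs satisfying the determinant conjecture that are \emph{not} co-sofic, one cannot invoke a sofic approximation here. One is therefore forced into a genuine spectral limit, and controlling $\ln_+$ through that limit is precisely a \emph{determinant-class} estimate (no spectral mass escaping to $0$). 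This is the same analytic difficulty that keeps the measured determinant conjecture open, so the hard part is not bookkeeping but converting the equivariant integrality handed over by the IRS hypothesis into a non-equivariant determinant-class bound \emph{without} routing through soficity.

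Finally, I would pursue two routes around this obstacle. The first is an \emph{induction/amplification} approach: realize a non-equivariant integer operator $T$ on orbits as a compression of a $\Gamma$-equivariant integer operator $\tilde T$ on an enlarged (co-induced or skew-product) system, arranged so that $\tilde T\in M_N(\mathbb{Z}[\Gamma/H_{\tilde\alpha}])$ for an enlarged IRS $\tilde\alpha$ still satisfying the determinant conjecture and so that the log-determinant of $\tilde T$ dominates that of $T$; the delicate point is matching both the trace and the determinant under the enlargement while keeping all coefficients integral. The second, more in the spirit of this paper, is to feed the question into the $\MIPstar=\RE$ framework of Section~\ref{sec:3.1}: if one can present ``$\mathcal{R}_\alpha$ satisfies the determinant conjecture'' as membership in a nice class admitting computable upper bounds and containing all finite models, the non-constructive mechanism behind Theorem~\ref{thm:1.3} would at least locate such relations, even if it does not settle the implication for every $\alpha$. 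I expect the determinant-class step to remain the crux on either route.
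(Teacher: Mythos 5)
This statement is labeled as a \emph{conjecture} in the paper and is left open there; the paper offers no proof, only the remark that ``the only obstruction arises from the intermediate $L^\infty(X)$ algebra'' and the observation that a positive resolution would produce a non-sofic equivalence relation satisfying the determinant conjecture. So there is no proof in the paper against which to check you. Your reformulation is faithful to the paper's framing: the identification of $\mathbb{Z}[\Gamma/H_\alpha]$ with the constant-coefficient (equivariant) integer operators inside $\mathbb{Z}[\mathcal{R}_\alpha]$, and the diagnosis that the entire difficulty lies in extending the $\ln_+$ inequality from these to operators with nonconstant $L^\infty(X)$ coefficients, is exactly the gap the author flags. Your observation that the finite-partition/itinerary reduction only closes up when the dynamics admits finite models --- i.e.\ precisely in the sofic case that Theorem~\ref{thm:1.3} is designed to escape --- is also correct and is the right reason why the naive approximation route fails.

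That said, what you have written is not a proof and you say so yourself: the ``determinant-class step'' (preventing spectral mass from escaping to $0$ in the limit, or matching trace and determinant under a co-induction/amplification) is the whole content of the conjecture, and neither of your two proposed routes resolves it. The second route in particular (feeding the question into the $\MIPstar=\RE$ machinery of Section~\ref{sec:3.1}) cannot prove the implication for a given action $\alpha$; at best it is an existence mechanism of the kind used for Theorem~\ref{thm:1.3}, and it would require first establishing computable upper bounds for a value over relations satisfying the determinant conjecture --- which presupposes a local certificate for exactly the property you are trying to establish. In short: your analysis of where the difficulty sits agrees with the paper, but the statement remains unproven by your proposal, consistent with its status in the paper as an open conjecture.
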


A positive resolution of this conjecture would yield, for the first time, a non-sofic equivalence relation satisfying the determinant conjecture, from Theorem \ref{thm:1.3}. Moreover, it would show that the measured determinant conjecture for groups coincides with the usual determinant conjecture, by specializing to the case of free actions.

\section{Non-Local Games}\label{sec:3}

In this section, we recall the connection between traces on certain group $C^*$-algebras and strategies for non-local games.  
For background and a more detailed discussion, see \cite[Sections~4--5]{goldbring_connes_2021}.  
We follow the notational conventions of \cite{manzoor_there_2025}.

\begin{definition}
    A \textbf{non-local game} $\mathfrak{G}$ has the following parameters:
    \begin{itemize}
        \item a finite \textbf{question set} $Q$;
        \item a probability distribution $q$ on $Q \times Q$ (the questions asked jointly to Alice and Bob);
        \item an integer $m \geq 1$ describing the length of each player's answer in bits;
        \item a \textbf{decider function} 
        \[
            D : (\{0,1\}^m)^2 \times Q^2 \to \{0,1\}, 
        \]
        where $D(a,b |x,y)$ determines whether the verifier accepts the answers $a,b$ to questions $x,y$.
    \end{itemize}
\end{definition}

Usually such games are called synchronous in literature, but we will never consider non-synchronous games so we drop that adjective. One can think of a non-local game as an interactive proof system:  
a verifier samples $(x,y) \sim q$ from $Q \times Q$, sends $x$ to Alice and $y$ to Bob,  
who reply with $m$-bit strings $a,b \in \{0,1\}^m$.  
The verifier accepts if and only if $D(a,b |x,y) = 1$.  
A \textbf{strategy} for Alice and Bob is thus given by a collection of conditional probabilities  
\[
    \big(p(a,b|x,y) \big)_{a,b,x,y}.
\]

To analyze such games algebraically, we associate to $\mathfrak{G}$ a set of formal variables
\[
    S(\mathfrak{G}) = \{ u_{x,i} \;:\; x \in Q,\, 1 \leq i \leq m \},
\]
where $u_{x,i}$ represents the $i$th bit of the answer to question $x$.  
For brevity, we write $S$ when the game is fixed and $S_x$ for the variables corresponding to a fixed $x$.

Let $\mathcal{F}(S,2)$ be the group generated by the elements of $S$ subject to the relations $u^2 = 1$.  
Intuitively, we treat each answer bit as a unitary with eigenvalues $\pm 1$, with the eigenvalues corresponding to a possible output of measuring the quantum bit.
Next, we impose that answers to the same question commute:
\[
    [u_{x,i},u_{x,j}] = 1 \quad \text{for each } x \in Q,\ 1 \leq i,j \leq m.
\]
The resulting quotient group is denoted
\[
    \Conv_Q \mathbb{Z}_2^m.
\]
This group encodes the idea that each question $x$ has $m$ binary answers, with commuting relations among them.  
Its full group $C^*$-algebra $C^*(\Conv_Q \mathbb{Z}_2^m)$ serves as the universal algebra in which to model strategies.

Inside this algebra, the unitary corresponding to $u_{x,i}$ is denoted $U_{x,i}$.  
For each answer $a \in \mathbb{Z}_2^m$, define the spectral projection
\[
    e_x^a := \prod_{i \leq m} \frac{1 + (-1)^{a_i} U_{x,i}}{2}.
\]
These projections satisfy $e_x^a e_x^b = \delta_{ab} e_x^a$ and $\sum_a e_x^a = 1$,  
and can be interpreted as the ``quantum probability'' of answering $a$ to question $x$.

A trace on $C^*(\Conv_Q \mathbb{Z}_2^m)$ turns these ``quantum probabilities'' into actual probabilities we can work with, and hence strategies. By narrowing down the set of traces, we can get different kind of strategies on non-local games.

\begin{definition}\label{def:3.2}
    Let $\mathfrak{G}$ be a non-local game with question set $Q$ and answer length $m$,  
    and let $\mathcal{A} := C^*(\Conv_Q \mathbb{Z}_2^m)$.  
    We define:
    \begin{enumerate}
        \item A \textbf{synchronous quantum commuting strategy} if there exists a trace $\tau \in T(\mathcal{A})$ with  
        \[
           p(a,b|x,y) = \tau(e_x^a e_y^b).
        \]

        \item A \textbf{synchronous quantum approximate strategy} if the above trace $\tau$ is \textbf{amenable},  
        i.e.\ it is a weak$^*$-limit of finite-dimensional traces.

        \item An \textbf{IRS strategy} if there exists an IRS $H$ on  
        $\Conv_Q \mathbb{Z}_2^m \times \mathbb{Z}_2$ such that
        \[
           p(a,b|x,y) = \tau_H \big( (1-J) e_x^a e_y^b \big),
        \]
        where $J$ is the generator of $\mathbb{Z}_2$ in $C^*(\Conv_Q \mathbb{Z}_2^m \times \mathbb{Z}_2)$  
        (see \cite{manzoor_there_2025}).

        \item A \textbf{permutation strategy} if it is an IRS strategy coming from a finitely described IRS.

        \item A \textbf{Det-IRS strategy} if it is an IRS strategy coming from an IRS satisfying the determinant conjecture.
    \end{enumerate}
\end{definition}
We remark that definitions $(i)$ and $(ii)$ are not standard but are equivalent to the standard definition by \cite[Theorem 3.5]{kim_synchronous_2018}. The IRS strategies are, from a physics point of view, an extremely unnatural class of strategies. However, this artificial class is precisely what was needed to find a non co-hyperlinear IRS in \cite{manzoor_there_2025}. We use the Det-IRS strategies to replicate the methods of that paper in our setting.

The relevant inclusion relations can be summarized as
\[
\text{Permutation strategies } \subset \text{ Det-IRS strategies } \subset \text{IRS strategies}.
\]  

Let $\omega^*(\mathfrak{G})$ denote the optimal winning probability over quantum approximate strategies,  
and let $\omega_{\mathrm{det}}(\mathfrak{G})$ be the optimal winning probability over Det-IRS strategies.  
The main idea of \cite{manzoor_there_2025} was to find $\mathfrak{G}$ such that  
$\omega^*(\mathfrak{G})$ is strictly less than the best IRS winning probability,  
producing a non-co-hyperlinear IRS.  
We aim to do the same, but with the IRS additionally satisfying the determinant conjecture.  
In words: if an Det-IRS strategy can beat every quantum strategy in some game, then it comes from an IRS that satisfies the determinant conjecture but is not co-hyperlinear.

\begin{theorem}\label{thm:3.5}
    Suppose there exists a game $\mathfrak{G}$ with
    \[
        \omega_{\mathrm{det}}(\mathfrak{G}) > \omega^*(\mathfrak{G}).
    \]
    Then there exists a non-co-hyperlinear IRS on some free group which satisfies the determinant conjecture.
\end{theorem}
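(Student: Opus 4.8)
The plan is to run the argument of \cite{manzoor_there_2025} that converts a game-theoretic gap into a non-co-hyperlinear IRS, but to feed it a Det-IRS strategy rather than an arbitrary IRS strategy, so that the determinant conjecture is carried along for free, and then to transport the resulting IRS onto a free group.

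First I would extract a single witnessing strategy. Since $\omega_{\mathrm{det}}(\mathfrak{G})$ is a supremum over Det-IRS strategies and it strictly exceeds $\omega^*(\mathfrak{G})$, there is some Det-IRS strategy $p$ whose winning probability is strictly greater than $\omega^*(\mathfrak{G})$; no attainment of the supremum is required, as it suffices to choose any strategy within the gap. By Definition~\ref{def:3.2}(v) this $p$ arises from an IRS $H_0$ on $G := \Conv_Q \mathbb{Z}_2^m \times \mathbb{Z}_2$ that satisfies the determinant conjecture, via $p(a,b|x,y) = \tau_{H_0}\big((1-J)e_x^a e_y^b\big)$.

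Next I would show $H_0$ is not co-hyperlinear. Suppose it were; then $\tau_{H_0}$ is amenable, so $L(G/H_0)$ is Connes embeddable. Conditioning $\tau_{H_0}$ by the central projection $(1-J)/2$ and restricting to the copy of $\Conv_Q \mathbb{Z}_2^m$ produces a trace on $C^*(\Conv_Q \mathbb{Z}_2^m)$ realizing exactly the correlation $p$; amenability survives this passage because Connes embeddability descends to corners and to subalgebras of $L(G/H_0)$. Hence $p$ would be a synchronous quantum approximate strategy in the sense of Definition~\ref{def:3.2}(ii), forcing its winning probability to be at most $\omega^*(\mathfrak{G})$ and contradicting the choice of $p$. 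This is precisely the implication ``co-hyperlinear IRS strategies are quantum approximate strategies'' established in \cite{manzoor_there_2025}; since Det-IRS strategies form a subclass of IRS strategies, that argument applies verbatim, the key observation being that nothing in it ever required $H_0$ to be more than an IRS strategy.

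Finally I would transport $H_0$ from $G$ to a free group. Choose a finitely generated free group $F$ together with a surjection $\pi \colon F \to G$, and let $H$ be the pullback IRS, obtained by drawing a subgroup $K \le G$ according to $H_0$ and returning $\pi^{-1}(K)\le F$. Then $\tau_H = \tau_{H_0}\circ \pi$, so the GNS data coincide: $L(F/H)\cong L(G/H_0)$, and since $\pi$ maps $\mathbb{Z}[F]$ onto $\mathbb{Z}[G]$ we also get $\mathbb{Z}[F/H]=\mathbb{Z}[G/H_0]$ with matching traces. Because co-hyperlinearity depends only on $L(F/H)$, and the determinant conjecture only on the pair $(\tau_H,\mathbb{Z}[F/H])$, both properties transfer unchanged: $H$ satisfies the determinant conjecture yet is not co-hyperlinear, as required. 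I expect the main obstacle to be the co-hyperlinear-implies-quantum-approximate step, namely the corner argument through the auxiliary $\mathbb{Z}_2$ and the projection $(1-J)/2$; in the present paper this is inherited from \cite{manzoor_there_2025}, so the genuinely new bookkeeping lies in verifying that the free-group pullback preserves the integral structure $\mathbb{Z}[\Gamma/H]$ on which the determinant conjecture is defined.
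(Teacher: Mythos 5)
Your proposal is correct and follows essentially the same route as the paper: the paper's proof simply invokes \cite{manzoor_there_2025}, Theorem 3.5, together with the observation that the lift of $H_0$ to the free group $\mathcal{F}(S\cup\{J\})$ still satisfies the determinant conjecture, which is exactly your pullback step via the surjection $\pi\colon F\to G$ and the identification $\mathbb{Z}[F/H]=\mathbb{Z}[G/H_0]$. You have merely spelled out in more detail the two ingredients the paper leaves as citations (the co-hyperlinear-implies-quantum-approximate corner argument and the transfer of the determinant conjecture along the lift).
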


\begin{proof}
    The proof follows exactly as in \cite[Theorem~3.5]{manzoor_there_2025},  
    with the observation that if $H$ is an IRS on  
    $\Conv_Q \mathbb{Z}_2^m \times \mathbb{Z}_2$ satisfying the determinant conjecture,  
    then its lift to $\mathcal{F}(S \cup \{J\})$ also satisfies the determinant conjecture.
\end{proof}

\subsection{NPA hierarchy}\label{sec:3.1}

To separate $\omega_{\mathrm{det}}(\mathfrak{G})$ from $\omega^*(\mathfrak{G})$,  
we need computable upper bounds for $\omega_{\mathrm{det}}(\mathfrak{G})$,  
analogous to the \textbf{NPA hierarchy} \cite{navascues_convergent_2008} for quantum strategies:  
\begin{theorem}\label{thm:det-npa}
    For any non-local game $\mathfrak{G}$ there is a computably enumerable, monotonically decreasing sequence $\alpha_n \to \omega_{\mathrm{det}}(\mathfrak{G})$.
\end{theorem}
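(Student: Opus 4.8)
The plan is to adapt the moment/SDP (NPA) methodology to the tracial setting and to augment it with constraints that encode the determinant conjecture. Write $G := \Conv_Q\mathbb{Z}_2^m\times\mathbb{Z}_2$ and $\mathcal{A} := C^*(G)$. A Det-IRS strategy is determined by a trace $\tau$ on $\mathcal{A}$ that (a) is a tracial state, (b) arises from an IRS on $G$, and (c) satisfies the determinant conjecture; the winning probability is the linear functional $\sum_{x,y}q(x,y)\sum_{D(a,b|x,y)=1}\tau\big((1-J)e_x^a e_y^b\big)$ of the moments $\tau(w)$, since each $e_x^a$ is a fixed polynomial in the generators. First I would set up, for each level $n$, a truncated moment problem on words of length $\le n$ in the generators $U_{x,i},J$, imposing: positive semidefiniteness of the moment matrix; the algebraic relations ($U_{x,i}^2=1$, the commutations, $J$ central of order two); and the tracial/cyclicity identities. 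By the usual GNS reconstruction these constraints, taken over all $n$, cut out exactly the tracial states of $\mathcal{A}$. On top of this I would impose the IRS constraints and the determinant constraints described next; the value $\alpha_n$ is the supremum of the game functional over the level-$n$ feasible region.

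The IRS layer I would take from the framework of \cite{manzoor_there_2025}: the condition characterizing which states arise as $\tau_H$ (including the role of the central involution $J$) is a countable family of closed, enumerable moment conditions whose common solution set is precisely the IRS traces. The genuinely new layer is the determinant conjecture. The obstacle is that $\ln_+$ is singular at $0$, so no polynomial lies below it and the naive moment relaxation cannot see the logarithmic behaviour near the origin of the spectrum. The resolution is to approximate $\ln_+$ \emph{from above}. For each enumerated $A\in M_\ell(\mathbb{Z}[G])$ one has a computable bound $K_A\ge\norm{A^*A}$, and on $[0,K_A]$ the function $\ln_+$ is upper semicontinuous and bounded above, hence the decreasing pointwise limit of an explicit sequence of Lipschitz functions $g^A_k\downarrow\ln_+$. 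Writing $\mu_{A^*A}$ for the spectral distribution of $A^*A$ under $\tau\otimes\tr_\ell$, monotone convergence gives $(\tau\otimes\tr_\ell)\big(g^A_k(A^*A)\big)\downarrow \int\ln_+\,d\mu_{A^*A}$, so the determinant conjecture for $A$ is \emph{equivalent} to the family of inequalities $(\tau\otimes\tr_\ell)\big(g^A_k(A^*A)\big)\ge0$, over all $k$. Each such quantity is linear in $\tau$ and, since $g^A_k$ is continuous on $[0,K_A]$, is approximable to any precision by a polynomial in the moments (with error controlled by the uniform approximation error, as $\mu_{A^*A}$ is a probability measure). At level $n$ I would impose these inequalities for all $A,k$ of complexity $\le n$, using polynomial surrogates of degree at most $n$ with a vanishing slack $\delta_n\to0$.

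Soundness is then immediate: any Det-IRS trace satisfies every imposed constraint (positivity and tracial identities from being a trace, the IRS conditions by construction, and $(\tau\otimes\tr_\ell)(g^A_k(A^*A))\ge\int\ln_+\,d\mu_{A^*A}\ge0$ by the determinant conjecture), so $\alpha_n\ge\omega_{\mathrm{det}}(\mathfrak{G})$; the feasible region is nonempty and contains all permutation strategies by Lemma~\ref{lem:2.13}. Arranging the level-$(n+1)$ constraints to refine those at level $n$ makes $(\alpha_n)$ monotonically decreasing, and each $\alpha_n$ is the optimum of a semidefinite program with rational data; solving it to accuracy $1/n$ yields rationals that one arranges into a computably enumerable, monotonically decreasing sequence.

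For convergence I would run the standard compactness argument: take near-optimal feasible moment vectors at each level, extract a subsequential limit (the state space of $\mathcal{A}$ is weak$^*$-compact), and GNS-reconstruct a tracial state $\tau^*$ attaining $\inf_n\alpha_n$ that satisfies every constraint of the infinite hierarchy. The point is that this forces $\tau^*$ to be a genuine Det-IRS trace: it is an IRS trace by the characterization cited above, and it satisfies the determinant conjecture because, for each $A$, the map $\tau\mapsto\int\ln_+\,d\mu_{A^*A}$ is weak$^*$-upper-semicontinuous. Indeed $\tau\mapsto\mu_{A^*A}$ is weak$^*$-continuous (its moments are polynomials in the moments of $\tau$, and $\mu_{A^*A}$ lives on the fixed compact $[0,K_A]$), while integration of the upper-semicontinuous, bounded-above function $\ln_+$ is itself weak$^*$-upper-semicontinuous; hence the set of determinant-conjecture traces is weak$^*$-closed and $\int\ln_+\,d\mu_{A^*A}\ge0$ holds at $\tau^*$ for every $A$. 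Consequently $\inf_n\alpha_n$ equals the game value maximised over the weak$^*$-compact set of Det-IRS correlations, namely $\omega_{\mathrm{det}}(\mathfrak{G})$. I expect the main difficulty to lie precisely in this determinant layer: establishing the approximation-from-above encoding as simultaneously \emph{complete} (recovering the determinant conjecture in the limit, not a strictly weaker or stronger condition) and \emph{computable} (controlling polynomial degrees, the bounds $K_A$, and the slack schedule $\delta_n$), and in verifying that the IRS characterization of \cite{manzoor_there_2025} passes to the weak$^*$-limit jointly with the determinant constraints.
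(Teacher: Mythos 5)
Your proposal is correct and follows essentially the same route as the paper: a hierarchy of finite, computable relaxations whose IRS layer is imported from \cite{manzoor_there_2025}, with the new determinant layer encoded by approximating $\ln_+$ \emph{from above} on $[0,\|A^*A\|]$ by a computable, monotonically decreasing family of polynomial constraints, and convergence established by weak$^*$-compactness together with the monotone (decreasing) limit recovering $(\tau\otimes\tr)(\ln_+(A^*A))\ge 0$ exactly. The only differences are cosmetic (moment-matrix SDP with slack $\delta_n$ versus the paper's finite linear program over local distributions with rational polynomials $g_n^N\ge\ln_+$ built directly via Bernstein approximation).
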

Here computably enumerable means there is a Turning machine that on input $\mathfrak{G}$ will enumerate the sequence $\{\alpha_n\}$.

We work in the setting of the previous subsection,  
fixing a non-local game $\mathfrak{G}$ with variable set $S$ and letting
\[
    \Gamma := \Conv_Q \mathbb{Z}_2^m \times \mathbb{Z}_2.
\]

This section will be technical, but the main point is we can take finite sets $B_n$ with $\bigcup_n B_n = \Gamma$. Then we can check if a strategy looks like it comes from an IRS satisfying the determinant conjecture locally on each $B_n$ (The IRS part was done in \cite[Section 3.2]{manzoor_there_2025}, the new contribution is from the determinant conjecture part). As $B_n$ gets bigger, the checks get stricter and we can use these to approximate the set of IRSs satisfying determinant conjecture from above.

For finite subsets $B \subset C \subset \Gamma$ we define the \textbf{restriction map}
\[
    R_{B \subset C} : \{0,1\}^C \to \{0,1\}^B,\quad
    A \mapsto A \cap B.
\]
When $C = \Gamma$, we write $R_B$. These restriction maps allow us to compare local data on different finite subsets.

The value of $\mathfrak{G}$ on a trace $\tau$ depends only on its restriction to a fixed finite subset $K \subset \Gamma$ \cite[Section~3.2]{manzoor_there_2025}.  
Thus for $B \supset K$ and $\pi \in \mathrm{Prob}(\{0,1\}^B)$ we may define $\Val(\mathfrak{G},\pi)$ as the game value for local data $\pi$.

For a finite $B \subset \Gamma$,  
let $\mathcal{C}_B \subset \mathrm{Prob}(\{0,1\}^B)$ be the computable polytope of local distributions that locally look like IRSs on $\Gamma$ restricted to $B$  
(as in \cite[Section~3.2]{manzoor_there_2025}, this is the image of $\mathcal{Q}_B\cap \mathcal{T}_B$ under the quotient map from the free group).

To encode the determinant conjecture locally,  
we approximate $\ln_+$ by rational polynomials:

\begin{lemma}
    For each $N \in \mathbb{N}$, there exists a sequence of rational polynomials $g_n^N$ such that:
    \begin{enumerate}
        \item $g_n^N \to \ln_+$ pointwise on $[0,N]$ as $n \to \infty$;
        \item $g_n^N(x) \geq g_{n+1}^{N'}(x)$ for all $n,N,N'$ and $x \in [0, \min(N,N')]$.
    \end{enumerate}
    $g_n^N$ can also be described computably in $(n,N)$.
\end{lemma}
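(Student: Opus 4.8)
The plan is to construct the polynomials $g_n^N$ explicitly from a uniform analytic approximation of $\ln$ on a positive interval, and then splice in a controlled piece near $0$ so that the pointwise limit hits $\ln_+$ (including the value $0$ at $t=0$) while the crucial monotonicity in (ii) is preserved. The core difficulty is reconciling two competing demands: near $t=0$ the function $\ln_+$ jumps from $0$ (at the origin) to $-\infty$ (as $t\to 0^+$), so any polynomial must eventually plunge downward to track $\ln$ on $(0,\delta)$, yet condition (ii) insists the whole family be \emph{monotonically decreasing in $n$} and dominate later members uniformly on the overlap $[0,\min(N,N')]$. So I need approximants that decrease pointwise to $\ln_+$ \emph{from above} everywhere on $[0,N]$, with the tail index $n$ giving a globally finer (lower) approximation independent of which $N$ we chose.

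First I would handle the region bounded away from $0$. On an interval $[\delta,N]$ the function $\ln$ is smooth, and by Weierstrass/Bernstein-type estimates (or truncated Chebyshev/Taylor expansions with rational coefficients) I can produce rational polynomials converging uniformly to $\ln$; the key refinement is to arrange them to converge \emph{monotonically from above}, which I can force by adding a small positive rational slack term $c_n$ (e.g.\ $c_n = 1/n$) that I let decrease to $0$, so $p_n(t) + c_n \ge \ln t \ge p_{n+1}(t)+c_{n+1}$ once the uniform error is smaller than the gap $c_n - c_{n+1}$. Second, to capture the value $\ln_+(0)=0$ and the behaviour on $(0,\delta)$, I would not try to approximate the discontinuity directly; instead I exploit that we only need pointwise (not uniform) convergence. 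The standard device is to approximate $\ln_+$ by $\ln(t+\epsilon_n)$-type smoothings or by polynomials that agree with the upper envelope: since $\ln_+(t)\le \ln(t)$ for $t\in(0,1]$ and $\ln_+(0)=0$, I can use polynomials lying \emph{above} $\ln_+$ on all of $[0,N]$ and decreasing to it, letting $\delta=\delta_n\to 0$ so the troublesome region shrinks while each fixed positive $t$ is eventually inside the good region $[\delta_n,N]$.

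Concretely, I would define $g_n^N$ on $[0,N]$ as a single rational polynomial (depending on $n,N$) obtained by, say, interpolating or least-squares fitting the function $\max(\ln_+, L_n)$ where $L_n=-n$ is a lower cutoff, then adding the slack $c_n$; because $L_n\to-\infty$ and $c_n\to 0$, at each fixed $t>0$ we get $g_n^N(t)\to\ln t$, and at $t=0$ the cutoff keeps the value controlled so $g_n^N(0)\to 0$. For the cross-$N$ monotonicity (ii), the decisive point is to make the construction \emph{depend on $n$ in a way that is uniform across $N$}: I would define the target upper-envelope function once on all of $[0,\infty)$ and let $g_n^N$ be its degree-$d(n,N)$ rational approximant with error below $c_n - c_{n+1}$ on $[0,\min(N,N')]$, so on that common interval $g_n^N \ge \text{(envelope)} \ge g_{n+1}^{N'}$. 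The main obstacle I expect is precisely this simultaneous control of the $n\to\infty$ \emph{convergence} and the $n\mapsto n+1$ \emph{monotone domination across different $N,N'$}; getting a single slack schedule $c_n$ that beats the approximation error uniformly over the relevant intervals is the delicate bookkeeping step, but it is a standard $\epsilon$-management argument once the envelope function is fixed. Computability in $(n,N)$ then follows because every ingredient -- the rational cutoff $L_n=-n$, the slack $c_n=1/n$, the polynomial degree, and the rational coefficients from a computable approximation scheme -- is produced by an explicit algorithm.
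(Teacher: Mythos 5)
Your overall architecture (cutoff at $-n$, a decreasing rational slack $c_n$, uniform approximation with error below the envelope gap to get cross-$N$ monotonicity) matches the paper's, and the $\epsilon$-management on a region $[\delta,N]$ bounded away from the origin is fine. But there is a genuine gap at $t=0$, which is exactly the point you identify as the "core difficulty" and then do not resolve. Your target envelope $\max(\ln_+,L_n)+c_n$ equals $c_n$ at $t=0$ but equals $-n+c_n$ for all small $t>0$; it is discontinuous at the origin, so no polynomial can be uniformly close to it there, and "interpolating or least-squares fitting" it gives no control of the value at $0$. If instead you approximate the continuous extension $\max(\ln t,-n)+c_n$ (value $-n+c_n$ at $0$), your approximant satisfies $g_n^N(0)\approx -n+c_n\to-\infty$, violating pointwise convergence to $\ln_+(0)=0$; and condition (ii) forces $g_n^N(0)$ to be a decreasing sequence converging to $0$, hence nonnegative for all $n$, while the polynomial must simultaneously drop to about $-n$ immediately to the right of $0$ and dominate $g_{n+1}^{N'}$ there. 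You assert "the cutoff keeps the value controlled so $g_n^N(0)\to 0$," but the cutoff $L_n=-n$ diverges and provides no such control; no mechanism in your construction produces a polynomial that is pinned near $0$ at the origin yet tracks the cutoff envelope just to its right, with the required domination across $n$ and $N$.

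The paper closes exactly this gap with one extra idea: factor out $t$. It sets $f_n(t)=\max(-n,\ln(t)/t+2^{-n})$, which \emph{is} continuous on $[0,N]$ (near $0$ the max is identically $-n$ since $\ln(t)/t\to-\infty$), approximates $f_{2n}$ uniformly within $2^{-2n-1}$ by a rational polynomial $f_{2n}^N$ (computably, via Bernstein polynomials), and defines $g_n^N(t):=t\,f_{2n}^N(t)$. The prefactor $t$ simultaneously forces $g_n^N(0)=0$ exactly, turns uniform closeness to $\ln(t)/t$ into pointwise convergence to $\ln t$ for each fixed $t>0$, and preserves the ordering $f_{2n}^N\ge f_{2n+2}^{N'}$ on the common interval since $t\ge 0$. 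Your proof would go through if you replaced your envelope by such a product form $t\cdot h_n(t)$ with $h_n$ continuous and bounded; as written, the step handling $t=0$ does not work.
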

\begin{proof}
    Let $f_n(t) = \max(-n, \ln(t)/t + 2^{-n})$.  
    By Stone--Weierstrass, approximate $f_n$ uniformly on $[0,N]$ within $2^{-n-1}$ by a rational polynomial $f_n^N$,  
    and set $g_n^N(t) := t f_{2n}^N(t)$. This can be done computably using Bernstein Polynomials. 
    This satisfies the stated properties.
\end{proof}

The reason this is useful is because we can write the set of traces satisfying determinant conjecture as follows:
\[T_{\mathrm{det}}(\Gamma) = \bigcap_n \{\tau \in T(\Gamma): (\tau\otimes \tr_k)((t\cdot f_n)(A^*A))\geq 0\quad \forall A\in M_k(\mathbb{Z}[\Gamma]), k\in \mathbb{N}\}.\]
So the condition of satisfying the determinant conjecture has nice upper bound. The above lemma is required to make these upper bounds ``computable''.

\begin{proof}[Proof of Theorem \ref{thm:det-npa}]
    Let $B_1 \supset K\cup\{e\}$ be finite,  
    $B_{n+1} \supset B_n$, and $\bigcup_n B_n = \Gamma$.  
    Let $\tilde{B}_n$ be all $\deg(g_n^{n^6})$ fold-products from $B_n$. This is so we can apply the polynomial to $B_n$ and end up in $\tilde{B}_n$. For $\pi \in \mathrm{Prob}(\{0,1\}^{B_n})$,  
    write $\tau_\pi(g) := \pi(\{A \subset B_n : g \in A\})$.

    Define $\mathcal{D}_n \subset \mathrm{Prob}(\{0,1\}^{\tilde{B}_n})$ to be those $\pi$ such that:
    \begin{quote}
    For every $k \leq n$ and every $A \in M_k(\mathbb{Z}[B_n])$  
    with entries having $\ell^1$-norm of coefficients $< n$, we have
    \[
        (\tau_\pi \otimes \mathrm{tr}_k) \big( g_n^{n^6}(A^*A) \big) \geq 0.
    \]
    \end{quote}
    Here $n^6$ bounds $\|A^*A\|$ since $\|A\| \leq n^3$ for such $A$.  
    This encodes a local, computable relaxation of the determinant conjecture.

    Now set
    \[
        \alpha_n := \sup \{ \Val(\mathfrak{G}, \pi) : \pi \in (R_{B_n\subset \tilde{B}_n})_*^{-1}(\mathcal{C}_{B_n} )\cap \mathcal{D}_n \}.
    \]
    The $(R_{B_n\subset \tilde{B}_n})_*^{-1}(\mathcal{C}_{B_n} )$ is because we only want to add conditions on $B_n$ but need probability measures on $\tilde{B}_n$ due to $\mathcal{D}_n$.
    \begin{itemize}
        \item Computability: This is the optimum of a finite linear program (since both $\mathcal{C}_{B_n}$ and $\mathcal{D}_n$ are described by finitely many rational inequalities that can be computed from $n$), hence computable.
        \item Monotonicity: If $m > n$, then $B_m \supset B_n$ and $g_m^{m^6} \leq g_n^{n^6}$ on $[0,n]$,  
    so $\mathcal{D}_m \subset R_{\tilde{B}_n \subset \tilde{B}_m}^{-1}(\mathcal{D}_n)$. The $\mathcal{C}$ part of the inclusion follows from \cite[Theorem 3.6, monotonicity]{manzoor_there_2025}.
    Thus $\alpha_m \leq \alpha_n$.
    \item Convergence:    The sets
    \[
        \mathcal{E}_n := (R_{\tilde{B}_n})_*^{-1}((R_{B_n\subset \tilde{B}_n})_*^{-1}(\mathcal{C}_{B_n} )\cap \mathcal{D}_n )
    \]
    are nonempty, convex, weak$^*$-compact in $\operatorname{Prob}(\mathfrak{sub}(\Gamma))$, and nested: $\mathcal{E}_{n+1} \subset \mathcal{E}_n$.  
    We claim:
    \[\bigcap_n \mathcal{E}_n = \IRS_{\mathrm{det}}(\Gamma).\]
    We already know $\bigcap_n (R_{B_n})_*^{-1}(\mathcal{C}_{B_n})=\IRS(\Gamma)$ from \cite[Theorem 3.6]{manzoor_there_2025}. All we need to check hence is that the $\mathcal{D}_n$ restrict the IRSs to satisfy the determinant conjecture. This is easy to see: if $H\in \IRS(\Gamma)$ is in the intersection of all the $(R_{\tilde{B}_n})_*^{-1}\mathcal{D}_n$ then for every $A\in M_k(\mathbb{Z}(\Gamma))$ there exists $N$ so that
    \[(\tau_H\otimes \tr_k)(g_m^{n}(A^*A))\geq 0 \quad n,m\geq N.\]
    we can find a sequence $g_{m_n}^{n}$ that will monotonically decrease and converge to $\ln_+$ on the spectrum of $A^*A$. By functional calculus we will have $(\tau_H\otimes \tr_k)(\ln_+(A^*A))\geq 0$, proving $\tau_H$ has the determinant conjecture.

    Since each of the $\mathcal{E}_n$ are compact, there is some $\pi_n\in \mathcal{E}_n$ with $\alpha_n = \Val(\mathfrak{G},\pi_n)$. Pick a $w^*$-cluster of the $\pi_n$, call it $\pi_\infty$. By passing to subsequence we can assume $\pi_n$ converge to $\pi_\infty$. Note that $\pi_\infty \in \bigcap_n\mathcal{E}_n =\IRS_{\mathrm{det}}(\Gamma)$ and that
    \[\lim_{n\to \infty} \alpha_n = \lim_{n\to \infty}\Val(\mathfrak{G},\pi_n) = \Val(\mathfrak{G},\pi_\infty) \leq \omega_{\mathrm{det}}(\mathfrak{G}).\]
    On the other hand we already know $\alpha_n\geq \omega_{\mathrm{det}}(\mathfrak{G})$ and this proves the result.
    \end{itemize}
\end{proof}

\begin{proof}[Proof of main Theorem \ref{thm:1.3}]
    For each Turing machine $\mathcal{M}$, \cite{bowen_aldous--lyons_2024} construct a game $\mathfrak{G}_\mathcal{M}$ such that:
    \begin{itemize}
        \item If $\mathcal{M}$ halts, then $\mathfrak{G}_\mathcal{M}$ has a perfect permutation strategy, hence a perfect Det-IRS strategy (see Lemma \ref{lem:2.13}.
        \item If $\mathcal{M}$ does not halt, then $\omega^*(\mathfrak{G}_\mathcal{M}) < \tfrac12$.
    \end{itemize}
    Let $\beta_n$ be the computable lower bounds for $\omega^*(\mathfrak{G}_\mathcal{M})$ coming from brute force over finite dimensional strategies.

    Suppose for contradiction that $\omega_{\mathrm{det}}(\mathfrak{G}) \leq \omega^*(\mathfrak{G})$ for all $\mathfrak{G}$.  
    Given $\mathcal{M}$, run the $\alpha_n$ and $\beta_n$ computations for $\mathfrak{G}_\mathcal{M}$ in parallel:  
    accept if some $\beta_n \geq \tfrac12$; reject if some $\alpha_n < 1$.

    If $\mathcal{M}$ halts, $\omega_{\mathrm{det}} = \omega^* = 1$,  
    so $\beta_n$ eventually exceeds $\tfrac12$ while $\alpha_n$ stays at $1$ — accept.  
    If $\mathcal{M}$ does not halt, $\omega^* < \tfrac12$ and $\omega_{\mathrm{det}} < 1$,  
    so $\beta_n$ never exceeds $\tfrac12$ and $\alpha_n$ eventually drops below $1$ — reject.  
    This would decide the halting problem, a contradiction.

    So there is a game with $\omega_{\mathrm{det}}(\mathfrak{G}) > \omega^*(\mathfrak{G})$, and by Theorem \ref{thm:3.5} we are done.
\end{proof}

\subsection{Narrowing down non--Connes embeddable algebras and IRSs}\label{sec:3.2}

Let $\mathfrak{G}$ be a non-local game with question set of size $q$ and $m$-bit answers, and set
\[
  \Gamma_{i,j} := \bigl(\mathbb{Z}_2^i \bigr)^{*j},
\]
the free product of $j$ copies of $\mathbb{Z}_2^i$. For each pair $(i,j)$, let $\mathcal{C}_{i,j}\subset T(\Gamma_{i,j})$ be a specified class of traces, and let
$\mathcal{C} = (\mathcal{C}_{i,j})_{i,j}$ denote the collection.  
Define
\[
  \omega_{\mathcal{C}}(\mathfrak{G})
  := \sup\{\operatorname{Val}(\mathfrak{G},\tau):\ \tau\in\mathcal{C}_{q,m}\},
\]
the optimal winning probability among strategies induced by traces in $\mathcal{C}_{q,m}$.  

Similarly, for IRSs let $\mathcal{H}_{i,j}\subset \IRS(\Gamma_{i,j})$ be given classes, and write
$\mathcal{H} = (\mathcal{H}_{i,j})_{i,j}$.  
Define $\omega_\mathcal{H}(\mathfrak{G})$ analogously as the optimal winning probability among IRS strategies from $\mathcal{H}_{q,m}$.

\begin{proposition}\label{prop:mip-nonemb}
\leavevmode
\begin{enumerate}
  \item[(A)] \textbf{Algebra version.}
  Suppose for the family $\mathcal{C}=(\mathcal{C}_{i,j})_{i,j}$:
  \begin{enumerate}
    \item[(i)] There exists a computably enumerable sequence $\alpha_n^{\mathcal{C}}$ decreasing monotonically to $\omega_{\mathcal{C}}(\mathfrak{G})$, and
    \item[(ii)] Each $\mathcal{C}_{i,j}$ contains all finite-dimensional traces of $\Gamma_{i,j}$.
  \end{enumerate}
  Then there exists some $(i,j)$ such that $\mathcal{C}_{i,j}$ contains a trace whose GNS von Neumann algebra is not Connes embeddable.

  \item[(B)] \textbf{IRS version.}
  Suppose for the family $\mathcal{H}=(\mathcal{H}_{i,j})_{i,j}$:
  \begin{enumerate}
    \item[(i$'$)] There exists a computably enumerable sequence $\alpha_n^{\mathcal{H}}$ decreasing monotonically to $\omega_{\mathcal{H}}(\mathfrak{G})$, and
    \item[(ii$'$)] Each $\mathcal{H}_{i,j}$ contains all finitely described IRSs of $\Gamma_{i,j}$.
  \end{enumerate}
  Then there exists some $(i,j)$ and $H\in\mathcal{H}_{i,j}$ such that $L(\Gamma_{i,j}/H)$ is not Connes embeddable (i.e.\ $H$ is not co-hyperlinear).

  \item[(C)] \textbf{Dual statement.}  
  Assume $\mathsf{MIP}^{\mathsf{co}}=\mathsf{coRE}$ (resp.\ $\mathsf{MIP}^{\mathsf{IRS}}=\mathsf{coRE}$).  
  Let $\mathcal{C}$ be a family of traces (resp. of IRSs) as above.  
  If there exists a computably enumerable sequence $\beta_n^\mathcal{C}$ increasing monotonically to $\omega_\mathcal{C}(\mathfrak{G})$, then there is some $(i,j)$ with $\mathcal{C}_{i,j}\neq T(\Gamma_{i,j})$ (resp.\ $\mathcal{H}_{i,j}\neq \IRS(\Gamma_{i,j})$).
\end{enumerate}
\end{proposition}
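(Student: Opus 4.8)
The plan is to handle all three parts through one mechanism: under the stated extremal hypothesis the class-restricted value collapses onto a canonical value ($\omega^*$ for (A) and (B), the commuting-operator value $\omega^{\mathrm{co}}$ for (C)), and then the assumed computable monotone sequence, combined with the monotone sequence that always exists on the \emph{other} side, decides a known-undecidable promise problem. For (A) I would argue by contradiction: suppose every $\mathcal{C}_{i,j}$ consists only of traces with Connes-embeddable GNS algebra. Such traces are exactly the amenable ones, hence weak$^*$-limits of finite-dimensional traces; since $\Val(\mathfrak{G},\cdot)$ depends on only finitely many values of the trace and is therefore weak$^*$-continuous, the supremum over amenable traces equals the supremum over finite-dimensional traces, giving $\omega_{\mathcal{C}}(\mathfrak{G})\le\omega^*(\mathfrak{G})$. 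Hypothesis (ii) gives the reverse inequality, so $\omega_{\mathcal{C}}(\mathfrak{G})=\omega^*(\mathfrak{G})$ for every $\mathfrak{G}$. Feeding in the family $\mathfrak{G}_{\mathcal{M}}$ of \cite{bowen_aldous--lyons_2024} with its gap ($\omega^*=1$ if $\mathcal{M}$ halts, $\omega^*\le\tfrac12$ otherwise), hypothesis (i) supplies computable upper bounds $\alpha_n^{\mathcal{C}}\searrow\omega^*$, while brute-force search over finite-dimensional strategies supplies computable lower bounds $\beta_n\nearrow\omega^*$. Running both in parallel (accept if some $\beta_n>\tfrac12$, reject if some $\alpha_n^{\mathcal{C}}<1$) decides the halting problem exactly as in the proof of Theorem~\ref{thm:1.3}; this contradiction forces some $\mathcal{C}_{i,j}$ to contain a non-embeddable trace.

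For (B) I would run the identical argument with ``amenable trace'' replaced by ``co-hyperlinear IRS''. If every IRS in $\mathcal{H}_{i,j}$ is co-hyperlinear, then its canonical trace $\tau_H$ is amenable, so the IRS-strategy value is a weak$^*$-limit of values of finite-dimensional trace strategies and hence $\le\omega^*$; thus $\omega_{\mathcal{H}}(\mathfrak{G})\le\omega^*(\mathfrak{G})$. On the other side, when $\mathcal{M}$ halts the game $\mathfrak{G}_{\mathcal{M}}$ has a perfect permutation strategy, which by (ii$'$) lies in $\mathcal{H}_{q,m}$, so $\omega_{\mathcal{H}}(\mathfrak{G}_{\mathcal{M}})=1=\omega^*(\mathfrak{G}_{\mathcal{M}})$; when $\mathcal{M}$ does not halt, $\omega_{\mathcal{H}}\le\omega^*\le\tfrac12$. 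The upper bounds $\alpha_n^{\mathcal{H}}$ from (i$'$) together with the finite-dimensional lower bounds $\beta_n$ again decide halting, and the contradiction yields a non-co-hyperlinear $H$.

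For (C) I would dualize. Assuming the opposite, that every $\mathcal{C}_{i,j}=T(\Gamma_{i,j})$ (resp.\ $\mathcal{H}_{i,j}=\IRS(\Gamma_{i,j})$), the class value is the full commuting-operator value $\omega^{\mathrm{co}}$ (resp.\ the full IRS value). The NPA hierarchy \cite{navascues_convergent_2008} (resp.\ the IRS hierarchy of \cite{manzoor_there_2025}, as in Theorem~\ref{thm:det-npa} without the determinant constraint) always furnishes computable upper bounds decreasing to this value, while the hypothesized $\beta_n^{\mathcal{C}}\nearrow\omega_{\mathcal{C}}$ furnishes computable lower bounds. Having computable bounds on both sides decides the promise problem ``$\omega^{\mathrm{co}}=1$ versus $\omega^{\mathrm{co}}\le\tfrac12$'', contradicting $\mathsf{MIP}^{\mathsf{co}}=\mathsf{coRE}$ (resp.\ $\mathsf{MIP}^{\mathsf{IRS}}=\mathsf{coRE}$), which makes that problem $\mathsf{coRE}$-complete and hence undecidable. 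Therefore some $\mathcal{C}_{i,j}\ne T(\Gamma_{i,j})$ (resp.\ $\mathcal{H}_{i,j}\ne\IRS(\Gamma_{i,j})$).

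The main obstacle is the value-collapse step in (A) and (B): verifying that the extremal hypothesis genuinely forces $\omega_{\mathcal{C}}\le\omega^*$ (resp.\ $\omega_{\mathcal{H}}\le\omega^*$). This rests on the weak$^*$-continuity of $\Val$ together with the characterization of Connes embeddability / co-hyperlinearity as being a weak$^*$-limit of finite-dimensional traces, and on matching the game parameters $(q,m)$ to a single index $(i,j)$ so that the gapped family lands in one class. Once this identification is in place, the computability half of each argument is routine and mirrors the proofs of Theorem~\ref{thm:det-npa} and Theorem~\ref{thm:1.3}.
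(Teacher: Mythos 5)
Your proposal is correct and follows essentially the same route as the paper, which only sketches this argument by reference to the proof of Theorem~\ref{thm:1.3}: you fill in the same contradiction scheme (value collapse onto $\omega^*$ via amenability and weak$^*$-continuity of $\Val$, then parallel computable upper and lower bounds deciding the halting problem, with the dual version for (C)). No substantive differences to report.
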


\begin{proof}[Proof sketch]
For (A), uncomputability of the quantum value $\omega^*$ (from $\mathsf{MIP}^*=\mathsf{RE}$\cite{ji_mipre_2022}) implies that $\omega_\mathcal{C}(\mathfrak{G})>\omega^*(\mathfrak{G})$ for some game, hence $\mathcal{C}$ must contain a non-amenable trace. This is essentially the same proof as Theorem \ref{thm:1.3}. 
For (B), the same reasoning applies using the permutation strategy version of $\mathsf{MIP}^*=\mathsf{RE}$ \cite{bowen_aldous--lyons_2024}.  
For (C), uncomputability of the commuting-operator value (resp.\ IRS value) separates it from $\omega_\mathcal{C}$ (resp.\ $\omega_\mathcal{H}$) for some game, proving that the class cannot equal the full space.
\end{proof}

The assumption $\mathsf{MIP}^{\mathsf{co}}=\mathsf{coRE}$ means that it is undecidable whether the commuting-operator value of a game is equal to $1$ or strictly less than $1/2$ (and it reduces to non-halting problem), and similarly for $\mathsf{MIP}^{\mathsf{IRS}}=\mathsf{coRE}$. This will be proved in an upcoming manuscript of Lin \cite{lin_mipco_2025}. See \cite{lin_tracial_2024} for partial progress.

Note that (C) says that the class of all von Neumann algebras or IRSs on free groups cannot have a ``nice'' approximation property that encompasses everything.

Thus Proposition~\ref{prop:mip-nonemb} gives a general recipe: any class of IRSs or traces that is rich enough to contain finite-dimensional objects and admits computable bounds must contain non–Connes embeddable examples. In this paper, we apply this principle to IRSs satisfying the determinant conjecture.

\section{Future Directions}\label{sec:4}

If one wants to give a negative solution to Problem \ref{prob:1.2}, then one approach is suggested by Proposition \ref{prop:mip-nonemb}. That is: first prove $\mathsf{MIP}^{\mathsf{IRS}}=\mathsf{coRE}$ and then prove that $\omega_{\mathrm{det}}$ has computable lower bounds. However, this is unlikely, and the way to prove this to be impossible would be to prove deciding if $\omega_{\mathrm{det}}$ is equal to $1$ or less than $1/2$ is $\mathsf{coRE}$ complete. That is to prove $\mathsf{MIP}^{\mathsf{Det-IRS}}=\mathsf{coRE}$. This would hence be further evidence of Problem \ref{prob:1.2} having a positive solution and hence of the determinant conjecture on groups being true.

It would be nice to find other natural classes to apply Proposition \ref{prop:mip-nonemb} to.

It was natural to consider the determinant conjecture in the IRS setting as soficity implies it and we know non co-sofic IRSs exist. There are other properties implied by soficity in groups, such as Gottschalk’s Surjunctivity Conjecture and Kaplansky’s Direct Finiteness Conjecture, see \cite{elek_hyperlinearity_2005}. It would be interesting to see an extension of these concepts to IRSs, but there is an obstruction in IRSs having built in analytic structure. 

A proof of Conjecture \ref{conj:2.14} will be very interesting too, even in very restricted cases.
\section*{Conflict of Interest}
On behalf of all authors, the corresponding author states that there is no conflict of interest

\bibliography{sn-bibliography}

\end{document}